\newcommand{\reduline}[1]{\setulcolor{red}\ul{#1}}
\newtheorem{thm}{Theorem}[section]
\newtheorem{cor}[thm]{Corollary}
\newtheorem{lem}[thm]{Lemma}
\newtheorem{prop}[thm]{Proposition}
\theoremstyle{definition}
\newtheorem{notn}[thm]{Notation}
\theoremstyle{remark}
\newtheorem{rem}[thm]{Remark}
\title { Maximum  augmented Zagreb index on  
polyomino 
chains.}
\author{Manuel Montes-y-Morales} 
\author{Saylé Sigarreta}
\author{Hugo Cruz-Suárez}
\date{\today}
\affil{Facultad de Ciencias Físico Matemáticas,\\
Benemérita Universidad Autónoma de Puebla, Puebla, Mexico}
\date{}
\begin{document}

\maketitle
\abstract{
In this paper, we present a dynamic programming approach for identifying  extremal polyomino chains with respect to degree-based topological indices. This approach provides an explicit recurrence and constructive algorithm that enables both the computation of an extremal  polyomino chain in linear time with respect to its number of squares, and the enumeration of all maximal configurations in linear time with respect to their amount. As a main application, we resolve a problem posed in 2016 by characterizing the polyomino chains that maximize the Augmented Zagreb Index ($AZI$) for any fixed number of squares. The $AZI$, a degree-based index known for its strong chemical applicability in numerous studies, attains its maximum on two specific families of polyomino chains depending on the parity of their number of squares. We also derive closed-form expressions for the maximum $AZI$ and determine the exact number of extremal configurations. The results presented in this paper are aligned with previous contributions, and establish a constructive methodology for solving extremal problems in chemical graph theory, for which we provide a link to the code in the last section}

\textbf{Keywords}: polyomino chains, augmented Zagreb index, extremal graph theory, degree-based topological indices, dynamic programming. 

\section{Introduction}

Graph theory offers a powerful approach for exploring the relationship between the molecular structure of a material and its physicochemical properties. In this context, molecules are represented as graphs, where atoms correspond to vertices and chemical bonds to edges. These molecular graphs are analyzed using numerical descriptors known as topological indices \cite{a8}. Among the various classes of topological indices, degree-based indices play a particularly significant role due to their simplicity and strong predictive power in chemical applications.

These indices are especially useful in chemical graph theory. For instance, the geometric-arithmetic ($GA$) index has been successfully employed as a predictive tool in QSPR/QSAR studies \cite{a11}. Similarly, the atom-bond connectivity ($ABC$) index has proven valuable in modeling molecular properties such as polarity and refractivity \cite{a12}. Furthermore, the Randi\'c ($R$) index has shown strong correlations with the solubility of alkanes in water \cite{a10}. 

Among these descriptors, the Augmented Zagreb Index ($AZI$), was proposed as a natural extension of the classical Zagreb index to enhance their discriminatory power in chemical graph theory \cite{a2, a3}. The empirical effectiveness of the $AZI$ has been demonstrated in several studies, where it consistently outperforms other degree-based topological indices in modeling physicochemical properties. In particular, the $AZI$ shows stronger correlation coefficients with standard heats of formation and boiling points, especially for structurally similar compounds such as heptane and octane isomers \cite{a8}. These results highlight the ability of the $AZI$ to capture structural variations and its consistent performance across families of isomers. Despite significant progress in the study of $AZI$ across various graph classes, the problem of identifying the polyomino chains that maximize it, originally posed in 2016 \cite{a2}, has remained unresolved.

Polyomino systems are planar structures formed by connecting squares \cite{a17}. These configurations are particularly relevant in chemical graph theory as they can be used to model a variety of molecular structures, including polymers, crystal lattices, and certain classes of organic compounds \cite{a13}. In a polyomino system, two squares are said to be adjacent if they share a side. A polyomino system in which every square is adjacent with at most two other squares is called polyomino chain. Polyomino chains have been widely studied in the context of degree-based topological indices \cite{a18,a19}. Their recursive structure allows for a combinatorial encoding through link types, which facilitates algorithmic enumeration and structural analysis. Considerable attention has been devoted to characterizing extremal polyomino chains with respect to degree-based topological indices \cite{a7,aa6}.

In this work, we develop a dynamic programming approach to identify, for any fixed number of squares, the polyomino chains that maximize the $AZI$. Dynamic programming is a classical optimization technique that solves complex problems by breaking them into overlapping subproblems, each of which is solved only once and stored for future reuse \cite{a14,a15}. This method is particularly well suited for problems with recursive structure and optimal substructure properties, both characteristic features of the enumeration and optimization of polyomino chains. Our approach provides an explicit recurrence relation and a constructive algorithm that not only computes the extremal $AZI$ value, but also generates all maximal configurations. We prove that the maximum $AZI$ is attained by two specific families of polyomino chains, depending on whether the number of squares is even or odd. Furthermore, we derive closed-form expressions for the maximum $AZI$ and determine the exact number of maximal chains in each case.

The approach introduced herein establishes a general methodology for formulating and solving extremal problems involving degree-based topological indices in graph families with recursive structures. In doing so, it contributes to the ongoing effort to characterize extremal behavior in chemical graph theory.

\section{Preliminary Results}\label{s1}

This section presents the background and notation used throughout the paper. It begins with an introduction to the degree-based topological indices that form the foundation of the analysis. Next, the combinatorial structure of polyomino chains is described, including their representation through link types. Finally, a key lemma is provided, establishing a recursive formula for computing any degree-based topological index on polyomino chains.

The analysis in this paper centers on a particular family of topological indices known as \emph{degree-based indices}. These indices are defined by a general formula of the form:

\begin{equation}\label{TI}
   TI_f(G)= \sum_{uv\in E(G)} f(d_u, d_v),
\end{equation}
where $G$ is a graph, $E(G)$ is the set of edges of $G$, $d_u$ and $d_v$ are the degrees of the vertices $u$ and $v$, respectively, and $f$ is a real-valued symmetric function, that is, $f(x, y) = f(y, x)$ for all $x, y \in \{1, 2, \dots\}$.

This work focuses on the study of degree-based topological indices within a particular class of graphs called \emph{polyomino chains}.  A polyomino chain is a planar system composed of squares, where each square is adjacent to at most two others, i.e. it shares a side. It is worth noting that a polyomino chain consisting of $n$ squares, denoted by $PC_n$, can be generated recursively. Figure \ref{f1i} illustrates the base case $PC_2$. For $n \geq 2$, a new square can be added in one of two ways, as shown in Figure \ref{f2i}:
\begin{enumerate}
    \item [a)] Link type 1, in which the direction of the chain is preserved.
    \item [b)] Link type 2, in which the chain changes direction.
\end{enumerate}

This recursive process naturally induces a sequence composed of 1s and 2s, representing the link types used at each step. If we denote by $L_n$ the selected link type at stage $n \geq 3$, then a polyomino chain with $n$ squares can be represented by the notation $PC(L_3, L_4, \dots, L_n)$, where the direction of each added square follows the vector $(L_3, L_4, \dots, L_n)$. Two polyomino chains are considered distinct if their associated link vectors differ.
\begin{figure}[h!]

 \centering
\begin{tikzpicture}

\draw (3,0) rectangle (4,1);

\draw (4,0) rectangle (5,1);

(n4) at (1,1) {};
\node[fill=black, draw, circle, inner sep=2pt] (n5) at (3,0) {};
\node[fill=black, draw, circle, inner sep=2pt] (n6) at (3,1) {};
\node[fill=black, draw, circle, inner sep=2pt] (n7) at (4,0) {};
\node[fill=black, draw, circle, inner sep=2pt] (n8) at (4,1) {};
\node[fill=black, draw, circle, inner sep=2pt] (n7) at (5,1) {};
\node[fill=black, draw, circle, inner sep=2pt] (n8) at (5,0) {};

\end{tikzpicture}

    \caption{The graph $PC_2$.}

    \label{f1i}
\end{figure}
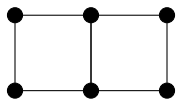

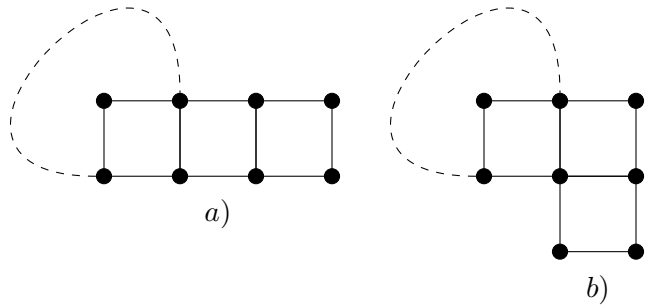
\begin{figure}[h!]
 \centering
\begin{tikzpicture}
  \clip (-3,-2) rectangle (9,3);

\draw (0,0) rectangle (1,1);

\draw (1,0) rectangle (2,1);

\draw (2,0) rectangle (3,1);

\node[fill=black, draw, circle, inner sep=2pt] (n1) at (0,0) {};
\node[fill=black, draw, circle, inner sep=2pt] (n2) at (1,0) {};
\node[fill=black, draw, circle, inner sep=2pt] (n3) at (0,1) {};
\node[fill=black, draw, circle, inner sep=2pt] (n4) at (1,1) {};
\node[fill=black, draw, circle, inner sep=2pt] (n5) at (2,0) {};
\node[fill=black, draw, circle, inner sep=2pt] (n6) at (3,0) {};
\node[fill=black, draw, circle, inner sep=2pt] (n7) at (2,1) {};
\node[fill=black, draw, circle, inner sep=2pt] (n8) at (3,1) {};
  \draw[ dashed] (n1) to  [out=180, in=90,looseness=5] (n4);

\draw (5,0) rectangle (6,1);
\draw (6,0) rectangle (7,1);
\draw (6,-1) rectangle (7,0);

\node[fill=black, draw, circle, inner sep=2pt] (n9) at (5,0) {};
\node[fill=black, draw, circle, inner sep=2pt] (n10) at (6,0) {};
\node[fill=black, draw, circle, inner sep=2pt] (n11) at (7,0) {};
\node[fill=black, draw, circle, inner sep=2pt] (n12) at (5,1) {};
\node[fill=black, draw, circle, inner sep=2pt] (n13) at (6,1) {};
\node[fill=black, draw, circle, inner sep=2pt] (n14) at (7,1) {};
\node[fill=black, draw, circle, inner sep=2pt] (n15) at (6,-1) {};
\node[fill=black, draw, circle, inner sep=2pt] (n16) at (7,-1) {};
\draw[ dashed] (n9) to  [out=180, in=90,looseness=5] (n13);

\node at (1.5,-0.5) {\small $a)$};
\node at (6.5,-1.5) {\small $b)$};
\end{tikzpicture}
   \caption{The two link ways for $PC_n$.}
    \label{f2i}
\end{figure}

\begin{rem}
In line with the definition of a polyomino chain and the preceding construction, this paper restricts the study of degree-based indices on polyomino chains to those formed by successively attaching each new square either to the right or below the previous one.  However, the family of polyomino chains is much broader, as illustrated in Figure~\ref{fn1}. Therefore, in what follows, we shall adopt the convention that: the term \emph{polyomino chains} will be reserved for the \emph{restricted} case studied in this paper, while the term \emph{general polyomino chains} will refer to those satisfying the definition without additional constraints.

\begin{figure}[h!]

 \centering
\begin{tikzpicture}

\draw (0,0) rectangle (1,1);
\draw (1,0) rectangle (2,1);
\draw (1,-1) rectangle (2,0);
\draw (1,-2) rectangle (2,-1);
\draw (0,-2) rectangle (1,-1);

\node[fill=black, draw, circle, inner sep=2pt] (n5) at (0,0) {};
\node[fill=black, draw, circle, inner sep=2pt] (n5) at (0,1) {};
\node[fill=black, draw, circle, inner sep=2pt] (n6) at (1,0) {};
\node[fill=black, draw, circle, inner sep=2pt] (n7) at (1,1) {};
\node[fill=black, draw, circle, inner sep=2pt] (n8) at (1,-1) {};
\node[fill=black, draw, circle, inner sep=2pt] (n7) at (1,-2) {};
\node[fill=black, draw, circle, inner sep=2pt] (n8) at (0,-2) {};

\node[fill=black, draw, circle, inner sep=2pt] (n9) at (0,-1) {};
\node[fill=black, draw, circle, inner sep=2pt] (n10) at (2,1) {};
\node[fill=black, draw, circle, inner sep=2pt] (n11) at (2,0) {};
\node[fill=black, draw, circle, inner sep=2pt] (n12) at (2,-1) {};
\node[fill=black, draw, circle, inner sep=2pt] (n12) at (2,-2) {};

\node at (1,-2.5) {$PC_5$};

\end{tikzpicture}

    \caption{The eight polyomino chains consisting of five squares, constructed under the constraint that each new square is appended either to the right or below the preceding square, are not isomorphic to $PC_5$.
}

    \label{fn1}
\end{figure}
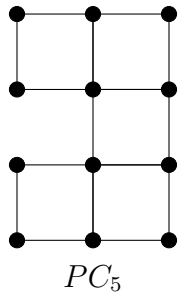

\end{rem}
A \emph{linear chain}, denoted by $Li_n$, corresponds to the case where all links are of type 1. Conversely, a \emph{zig-zag chain}, denoted by $Z_n$, is formed when all links are of type 2 (see Figure~\ref{f3i}). In this framework, a square is called \emph{terminal} if it has exactly one adjacent square, \emph{medial} if it has two adjacent squares and contains no vertex of degree 2; and a \emph{kink} if it has two adjacent squares and contains a vertex of degree 2. A \emph{segment} $s$ is defined as a maximal linear chain together with an adjacent kink or terminal square.  A segment is said to be \emph{external} if it contains a terminal square; otherwise, it is referred to as an \emph{internal} segment. Thus, every polyomino chain $PC_n$ induces a sequence of segments $s_1, s_2, \dots, s_m$ (see Figure~\ref{f5}), where the length of a segment $s_i$, defined as its number of squares, is denoted by $l(s_i) = l_i$. These lengths satisfy the relation: $\sum_{i=1}^{m} l_i = n + m - 1$.

 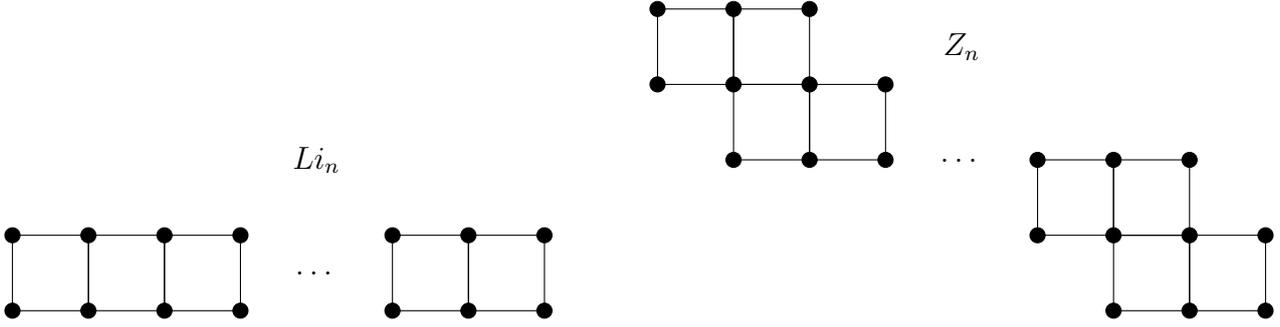
\begin{figure}
\centering

\begin{tikzpicture}


\draw (0,0) rectangle (1,1);

\draw (1,0) rectangle (2,1);

\draw (2,0) rectangle (3,1);

\node[fill=black, draw, circle, inner sep=2pt] (n1) at (0,0) {};
\node[fill=black, draw, circle, inner sep=2pt] (n2) at (1,0) {};
\node[fill=black, draw, circle, inner sep=2pt] (n3) at (0,1) {};
\node[fill=black, draw, circle, inner sep=2pt] (n4) at (1,1) {};
\node[fill=black, draw, circle, inner sep=2pt] (n5) at (2,0) {};
\node[fill=black, draw, circle, inner sep=2pt] (n6) at (3,0) {};
\node[fill=black, draw, circle, inner sep=2pt] (n7) at (2,1) {};
\node[fill=black, draw, circle, inner sep=2pt] (n8) at (3,1) {};

\node at (4,0.5) {$\dots$};
\node at (4,2) {$Li_n$};
\draw (5,0) rectangle (6,1);
\draw (6,0) rectangle (7,1);

\node[fill=black, draw, circle, inner sep=2pt] (n9) at (5,0) {};
\node[fill=black, draw, circle, inner sep=2pt] (n10) at (6,0) {};
\node[fill=black, draw, circle, inner sep=2pt] (n11) at (7,0) {};
\node[fill=black, draw, circle, inner sep=2pt] (n12) at (5,1) {};
\node[fill=black, draw, circle, inner sep=2pt] (n13) at (6,1) {};
\node[fill=black, draw, circle, inner sep=2pt] (n14) at (7,1) {};

\end{tikzpicture}
\hspace{1cm} 
\begin{tikzpicture}

\draw (0,0) rectangle (1,1);

\draw (1,0) rectangle (2,1);

\node[fill=black, draw, circle, inner sep=2pt] (n1) at (0,0) {};
\node[fill=black, draw, circle, inner sep=2pt] (n2) at (1,0) {};
\node[fill=black, draw, circle, inner sep=2pt] (n3) at (0,1) {};
\node[fill=black, draw, circle, inner sep=2pt] (n4) at (1,1) {};
\node[fill=black, draw, circle, inner sep=2pt] (n5) at (2,0) {};
\node[fill=black, draw, circle, inner sep=2pt] (n6) at (2,1) {};

\draw (1,-1) rectangle (2,0);
\draw (2,-1) rectangle (3,0);

\node[fill=black, draw, circle, inner sep=2pt] (n7) at (1,-1) {};
\node[fill=black, draw, circle, inner sep=2pt] (n8) at (2,-1) {};
\node[fill=black, draw, circle, inner sep=2pt] (n9) at (3,-1) {};
\node[fill=black, draw, circle, inner sep=2pt] (n10) at (3,0) {};

\node at (4,-1) {$\dots$};
\node at (4,0.5) {$Z_n$};

\draw (5,-2) rectangle (6,-1);
\draw (6,-2) rectangle (7,-1);
\draw (6,-3) rectangle (7,-2);
\draw (7,-3) rectangle (8,-2);

\node[fill=black, draw, circle, inner sep=2pt] (n11) at (5,-2) {};
\node[fill=black, draw, circle, inner sep=2pt] (n12) at (6,-1) {};
\node[fill=black, draw, circle, inner sep=2pt] (n13) at (6,-2) {};
\node[fill=black, draw, circle, inner sep=2pt] (n14) at (7,-1) {};
\node[fill=black, draw, circle, inner sep=2pt] (n15) at (6,-3) {};
\node[fill=black, draw, circle, inner sep=2pt] (n16) at (7,-2) {};
\node[fill=black, draw, circle, inner sep=2pt] (n17) at (7,-3) {};
\node[fill=black, draw, circle, inner sep=2pt] (n18) at (8,-2) {};
\node[fill=black, draw, circle, inner sep=2pt] (n19) at (5,-1) {};
\node[fill=black, draw, circle, inner sep=2pt] (n20) at (8,-3) {};

\end{tikzpicture}

 \caption{The linear chain and the zigzag chain.}
    \label{f3i}

\end{figure}

 \begin{figure}[h!]
    \centering
   \begin{tikzpicture}

\draw (0,0) rectangle (1,1);

\draw (1,0) rectangle (2,1);
\draw (2,0) rectangle (3,1);
\draw (3,0) rectangle (4,1);
\draw (3,-1) rectangle (4,0);
\draw (3,-2) rectangle (4,-1);
\draw (4,-2) rectangle (5,-1);
\draw (4,-3) rectangle (5,-2);
\draw (5,-3) rectangle (6,-2);
\draw (6,-3) rectangle (7,-2);

\node[fill=black, draw, circle, inner sep=2pt] (n1) at (0,0) {};
\node[fill=black, draw, circle, inner sep=2pt] (n2) at (1,0) {};
\node[fill=black, draw, circle, inner sep=2pt] (n3) at (0,1) {};
\node[fill=black, draw, circle, inner sep=2pt] (n4) at (1,1) {};
\node[fill=black, draw, circle, inner sep=2pt] (n5) at (2,0) {};

\node[fill=black, draw, circle, inner sep=2pt] (n7) at (2,1) {};
\node[fill=black, draw, circle, inner sep=2pt] (n7) at (3,0) {};
\node[fill=black, draw, circle, inner sep=2pt] (n7) at (3,1) {};
\node[fill=black, draw, circle, inner sep=2pt] (n7) at (4,0) {};
\node[fill=black, draw, circle, inner sep=2pt] (n7) at (4,1) {};
\node[fill=black, draw, circle, inner sep=2pt] (n7) at (3,-1) {};
\node[fill=black, draw, circle, inner sep=2pt] (n7) at (4,-1) {};

 \node[fill=black, draw, circle, inner sep=2pt] (n7) at (2,1) {};
\node[fill=black, draw, circle, inner sep=2pt] (n7) at (5,-1) {};
\node[fill=black, draw, circle, inner sep=2pt] (n7) at (3,-2) {};
\node[fill=black, draw, circle, inner sep=2pt] (n7) at (4,-2) {};
\node[fill=black, draw, circle, inner sep=2pt] (n7) at (5,-2) {};
\node[fill=black, draw, circle, inner sep=2pt] (n7) at (6,-2) {};
\node[fill=black, draw, circle, inner sep=2pt] (n7) at (7,-2) {};

\node[fill=black, draw, circle, inner sep=2pt] (n7) at (4,-3) {};
\node[fill=black, draw, circle, inner sep=2pt] (n7) at (5,-3) {};
\node[fill=black, draw, circle, inner sep=2pt] (n7) at (6,-3) {};
\node[fill=black, draw, circle, inner sep=2pt] (n7) at (7,-3) {};

\node at (-0.2,0.5) { $s_1$};
\node at (3.5,1.2) { $s_2$};
\node at (2.8,-1.5) { $s_3$};
\node at (4.5,-0.8) {$s_4$};
\node at (3.8,-2.5) {$s_5$};

\end{tikzpicture}
   \caption{Segments of a polyomino chain.}
    \label{f5}
\end{figure}
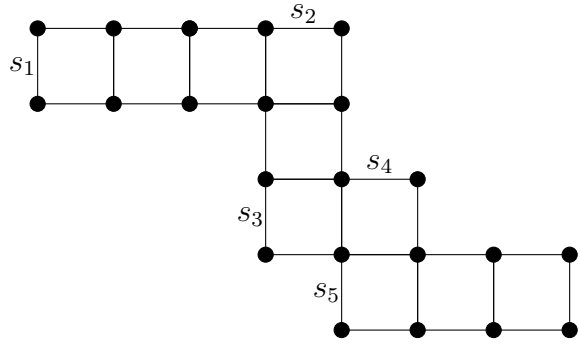
The following lemma summarizes a key result from \cite{a6}, which plays a central role in our work. We present it here with slight notational adjustments to align with our framework.

\begin{lem}\label{l1}
Let $ TI_f $ be a degree-based topological index, and let $ PC(L_3, \dots, L_n) $ be a polyomino chain with $ n \geq 3 $ squares. Then, the following recurrence holds:
$$
TI_f\big(PC(L_3, \dots, L_n)\big) = 
\begin{cases}
TI_f\big(PC(L_3, \dots, L_{n-1})\big) + g_f(L_{n-1}, L_n), & \text{if } n \geq 4, \\
TI_f\big(PC_2\big) + g_f(L_3), & \text{if } n = 3.
\end{cases}
,$$ where $PC_2$ is the polyomino chain with $2$ squares and \begin{itemize}
    
    \item[a)] $g_f(1,1)=g_f(1):=3f(3,3),$
    \item[b)] $g_f(1,2):=3f(3,4)+f(2,4)+f(2,3)-2f(3,3),$
    \item[c)] $g_f(2,1):=f(3,4)-f(2,4)+f(2,3)+2f(3,3),$
    \item[d)] $g_f(2,2):=f(4,4)+2f(2,4),$
    \item[e)] $g_f(2):=2f(3,4)+2f(2,4)-f(3,3).$
\end{itemize}
\end{lem}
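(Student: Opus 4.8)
The plan is to establish the recurrence by a direct, local analysis of what happens when one square is appended, not by a global induction. Write $P_k := PC(L_3,\dots,L_k)$ for $3 \le k \le n$ and $P_2 := PC_2$, and let $Q_2, Q_3, \dots$ be the squares in order of attachment. Passing from $P_{n-1}$ to $P_n$ amounts to gluing the new square $Q_n$ along one free side of the terminal square $Q_{n-1}$; on the underlying graph this adds exactly two new vertices, both of degree $2$, and three new edges, and it raises by $1$ the degrees of the two vertices lying on the side of $Q_{n-1}$ against which $Q_n$ is glued, while leaving all other degrees unchanged. Consequently the difference $TI_f(P_n) - TI_f(P_{n-1})$ equals the sum of the values $f(d_u,d_v)$ over the three new edges $uv$, plus the net change in $f(d_u,d_v)$ over the already-present edges incident to the two glued vertices. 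Since $f$ is a fixed symmetric function, this number is determined by the degrees, in $P_{n-1}$, of the two glued vertices and of their neighbours — a bounded amount of local data.

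I would next pin down that data. In $P_{n-1}$ the terminal square $Q_{n-1}$ has two \emph{far} corners, not on the side it shares with $Q_{n-2}$, each of degree $2$ (as $Q_{n-1}$ is terminal), and two \emph{near} corners on that shared side. If $L_{n-1}=1$, then $Q_{n-2}$ is medial: $Q_{n-3}$ sits on the side of $Q_{n-2}$ opposite $Q_{n-1}$ and both near corners have degree $3$. If $L_{n-1}=2$, then $Q_{n-2}$ is a kink: $Q_{n-3}$ meets exactly one near corner — the kink vertex, of degree $4$ — and the other near corner has degree $3$. In either case these degrees, and those of the few other vertices whose incident edges can change, are forced by $L_{n-1}$ alone, because within the chain $Q_{n-2}$ is adjacent only to $Q_{n-3}$ and $Q_{n-1}$ and, in the ``right/below'' model, the occupied cells form a monotone staircase, so no square earlier than $Q_{n-3}$ can touch the corners of $Q_{n-1}$ that enter the computation. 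The link $L_n$ supplies the remaining information: whether $Q_n$ is glued to the side of $Q_{n-1}$ opposite the $Q_{n-2}$ side ($L_n=1$, so both glued vertices are far corners) or to a side adjacent to it ($L_n=2$, so one glued vertex is a near corner and the other a far corner). Note that when $L_n=2$ the side $Q_{n-1}$ shares with $Q_{n-2}$ also changes its $f$-value, since one of its endpoints is among the glued vertices.

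With this structure in hand, the case $n\ge4$ reduces to a finite check. For each of the four pairs $(L_{n-1},L_n)\in\{1,2\}^2$ I would place $Q_{n-3},Q_{n-2},Q_{n-1}$ on the integer grid in a fixed position — the position of $Q_{n-3}$ relative to $Q_{n-2}$ being forced by $L_{n-1}$ up to the reflection that swaps the horizontal and vertical directions, which is a graph isomorphism — read off from the grid the degrees of the endpoints of every edge whose degree count changes, and sum the differences; applying $f(x,y)=f(y,x)$ puts each sum into the stated form of $g_f(1,1)$, $g_f(1,2)$, $g_f(2,1)$, $g_f(2,2)$. For the base case $n=3$ one has $P_2=PC_2$, whose terminal square $Q_2$ has as its only neighbour the terminal square $Q_1$, which is not a kink; hence the near corners of $Q_2$ have degree $3$, exactly as in the $L_{n-1}=1$ situation. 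The same two computations, now with only $L_3\in\{1,2\}$, yield $g_f(1)=3f(3,3)=g_f(1,1)$ and $g_f(2)=2f(3,4)+2f(2,4)-f(3,3)$. The discrepancy $g_f(2)\neq g_f(1,2)$ is explained by the absence of a square $Q_0$ behind $Q_2$: a neighbour of the degree-$4$ vertex produced by the gluing, which has degree $3$ in the generic $L_{n-1}=1$ case, here belongs only to the terminal square $Q_1$ and so has degree $2$.

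The substance of the argument is the structural claim of the second paragraph, namely that $TI_f(P_n)-TI_f(P_{n-1})$ is insensitive to $L_{n-2},\dots,L_3$; this is exactly where the ``right/below'' restriction is used, and I expect it to be the main obstacle. Once it is granted, the remaining work is bookkeeping, the step most liable to error being to include \emph{every} edge whose endpoint degrees change: beyond the three new edges and the two side edges of $Q_{n-1}$, one must not omit, in the $L_n=2$ cases, the side shared by $Q_{n-1}$ and $Q_{n-2}$. (Alternatively, the statement may be quoted directly from \cite{a6}, of which it is a restatement with adjusted notation.)
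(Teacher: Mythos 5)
The paper does not prove this lemma at all: it is imported verbatim (up to notation) from \cite{a6}, so there is no in-paper proof to compare against. Your local degree-change argument is a correct, self-contained derivation, and I verified that the bookkeeping it prescribes does reproduce all five formulas: for instance, in the case $L_{n-1}=1$, $L_n=2$ the changed edges contribute $\bigl(f(2,3)-f(2,2)\bigr)+\bigl(f(3,4)-f(2,3)\bigr)+2\bigl(f(3,4)-f(3,3)\bigr)$ on top of the new edges $f(2,2)+f(2,3)+f(2,4)$, which sums to $g_f(1,2)$, and your explanation of why $g_f(2)\neq g_f(1,2)$ (the neighbour of the new degree-$4$ vertex drops from degree $3$ to degree $2$ when $Q_1$ is terminal) accounts exactly for the difference $f(3,4)-f(3,3)-f(2,4)+f(2,3)$. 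One point deserves sharper wording: the vertices entering the computation are not only corners of $Q_{n-1}$. When $L_n=2$, the glued near corner has an incident edge running along $Q_{n-2}$ to a vertex $w$ lying on the side $Q_{n-2}$ shares with $Q_{n-3}$ (when $L_{n-1}=1$) or to a free corner of $Q_{n-2}$ (when $L_{n-1}=2$), and the degree of $w$ could a priori be raised to $4$ by $Q_{n-4}$. The monotone right/below structure rules this out — any kink vertex created at that shared side sits on the opposite endpoint from $w$, since new squares are only ever appended rightward or downward — but this is precisely the nontrivial locality claim, and your phrase ``corners of $Q_{n-1}$ that enter the computation'' understates it. With that check made explicit, the argument is complete and arguably more informative than the paper's bare citation, since it makes visible exactly where the restricted (non-general) polyomino-chain model is used.
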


\begin{rem}\label{r1}
    In particular, observe that $g_f(2)+g_f(2,1)=g_f(1,1)+g_f(1,2)$. This relationship will be used in the following sections.
\end{rem}
The recurrence provided in Lemma \ref{l1} shows that the contribution of each new square in a polyomino chain depends solely on the previous two links.  This observation allows us to apply a dynamic programming approach to identify extremal configurations with respect to degree-based topological indices.  

    \section{Dynamic Programming Approach applied to Polyomino chains} \label{s3}

Given a degree-based topological index $ TI_f $, we define $M_f(n,i)$ for $i = 1,2$ and $n\geq 3$, as the \emph{maximum value} of this index among all polyomino chains with $n$ squares and an ending link of type $i$; that is, $L_n = i$. In particular, we have $$M_f(3,1)=TI_f(PC(1))=TI_f(PC_2)+g_f(1,1),$$  and $$M_f(3,2)=TI_f(PC(2))=TI_f(PC_2)+g_f(2).$$ 
Motivated by the recursive nature of such indices in the polyomino chains, the following result provides a dynamic programming formulation for computing $M_f(n,i)$ for all $n\geq 4$ and $i\in \{1,2\}$.

\begin{thm}\label{t1}
Let $ TI_f $ be a degree-based topological index. Then, for all $ n \geq 4 $ and $i\in \{1,2\}$
$$
M_f(n,i) = \max\{ M_f(n-1,1) + g_f(1,i),\; M_f(n-1,2) + g_f(2,i) \}.
$$

\end{thm}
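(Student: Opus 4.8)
The plan is to establish the recurrence by a standard optimal-substructure argument, exactly as one does for dynamic programming correctness. Fix $n \geq 4$ and $i \in \{1,2\}$, and let $PC(L_3,\dots,L_n)$ be any polyomino chain with $n$ squares attaining $M_f(n,i)$, so that $L_n = i$. The key observation is that the prefix $PC(L_3,\dots,L_{n-1})$ is itself a polyomino chain with $n-1$ squares, whose terminal link is $L_{n-1} \in \{1,2\}$; call this value $j$. By Lemma~\ref{l1} (applied with $n \geq 4$, hence using the first case of the recurrence), we have
$$
M_f(n,i) = TI_f\big(PC(L_3,\dots,L_n)\big) = TI_f\big(PC(L_3,\dots,L_{n-1})\big) + g_f(j,i).
$$
First I would argue that $TI_f\big(PC(L_3,\dots,L_{n-1})\big) = M_f(n-1,j)$: it is certainly $\leq M_f(n-1,j)$ by definition of the latter as a maximum over all chains of length $n-1$ ending in link $j$, and if it were strictly less, then replacing the prefix by an optimal chain $PC(L_3',\dots,L_{n-1}')$ with $L_{n-1}' = j$ and appending link $i$ would yield a chain of length $n$ ending in $i$ with strictly larger index (again by Lemma~\ref{l1}), contradicting maximality of $M_f(n,i)$. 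Hence $M_f(n,i) = M_f(n-1,j) + g_f(j,i) \leq \max\{M_f(n-1,1)+g_f(1,i),\, M_f(n-1,2)+g_f(2,i)\}$.

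For the reverse inequality, take whichever $j \in \{1,2\}$ achieves the maximum on the right-hand side, and let $PC(L_3,\dots,L_{n-1})$ be a chain of length $n-1$ ending in link $j$ with $TI_f = M_f(n-1,j)$. Appending link $i$ produces a valid chain $PC(L_3,\dots,L_{n-1},i)$ of length $n$ ending in link $i$, and by Lemma~\ref{l1} its index equals $M_f(n-1,j) + g_f(j,i)$, which is therefore $\leq M_f(n,i)$. Combining the two inequalities gives the claimed identity. One mild edge case to note is $n = 4$: the prefix has $n-1 = 3$ squares, so $M_f(3,j)$ must be interpreted via the explicit formulas $M_f(3,1) = TI_f(PC_2) + g_f(1,1)$ and $M_f(3,2) = TI_f(PC_2) + g_f(2)$ given just before the theorem; this causes no difficulty since Lemma~\ref{l1} still applies with $n = 4 \geq 4$ in its first case.

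I do not anticipate a genuine obstacle here — the argument is the textbook "cut-and-paste" proof of optimal substructure. The only point requiring a little care is the justification that an optimal length-$n$ chain ending in $i$ must restrict to an optimal length-$(n-1)$ chain ending in its second-to-last link $j$; this is where the additive structure of the recurrence in Lemma~\ref{l1} (the increment $g_f(L_{n-1}, L_n)$ depends only on the last two links, not on the deeper history) is essential, since it lets us swap prefixes freely without affecting the contribution of the final square. Everything else is bookkeeping over the two possible values of $j$.
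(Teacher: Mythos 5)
Your proposal is correct and follows essentially the same two-inequality argument as the paper: append link $i$ to optimal $(n-1)$-square chains to get the lower bound on $M_f(n,i)$, and decompose an optimal $n$-square chain via Lemma~\ref{l1} to get the upper bound. The only difference is that you additionally prove the prefix of an optimal chain is itself exactly optimal (the paper only uses the inequality $TI_f(PC(L_3,\dots,L_{n-1})) \leq M_f(n-1,L_{n-1})$ here, deferring the exact-optimality claim to part a) of its Lemma~\ref{l2}); this extra step is harmless.
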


\begin{proof}
Given $j=1,2$, by definition of $M_f(n-1, j)$, there exist at least a polyomino chain $PC(L^{j}_3, \dots, L^{j}_{n-2},i)$, such that
$$
M_f(n-1,j) = TI_f(PC(L^{j}_3, \dots, L^{j}_{n-2},i) \big).$$ Hence, by Lemma \ref{l1}, we obtain 
$$M_f(n-1,1)+g_f(1,i)=TI_f\big( PC(L^{1}_3, \dots, L^{1}_{n-2},1,i) \big)$$ and

$$M_f(n-1,2)+g_f(2,i)=TI_f\big( PC(L^{2}_3, \dots, L^{2}_{n-2},2,i) \big).$$ 
Therefore,

$$ \max\{ M_f(n-1,1) + g_f(1,i),\; M_f(n-1,2) + g_f(2,i)\} \leq M_f(n,i).$$ Conversely, there exists at least one polyomino chain $ PC(L_3, \dots, L_{n-1}, i) $, such that

$$
M_f(n,i) = TI_f\big( PC(L_3, \dots, L_{n-1}, i)\big).
$$ Thus, by Lemma~\ref{l1}
\begin{align*}
M_f(n,i) &= TI_f\big( PC(L_3, \dots, L_{n-1})\big) + g_f(L_{n-1}, i) \\
         &\leq M_f(n-1, L_{n-1}) + g_f(L_{n-1}, i) \\
         &\leq \max\{ M_f(n-1,1) + g_f(1,i),\; M_f(n-1,2) + g_f(2,i) \}.
\end{align*}
This concludes the proof.
\end{proof}

Now, observe that due to the linearity of the degree-based topological indices (\ref{TI}), the identity $TI_{-f}(G)=-TI_{f}(G)$ holds. This implies that minimizing $TI_f$ is equivalent to maximizing $TI_{-f}$. Consequently, as a direct application of Theorem \ref{t1} and Lemma \ref{l1}, we obtain a method for computing the \emph{minimum value} of any degree-based topological index among all polyomino chains with $n$ squares and final link of type $i$, denoted by $m_f(n,i)$ with $n\geq3$ and $i\in \{1,2\}$.

\begin{cor}\label{c1}
   Let $ TI_f $ be a degree-based topological index. Then, for $ n \geq 3 $ and $i\in \{1,2\}$
$$
m_f(n,i) = -M_{-f}(n,i).
$$ 
\end{cor}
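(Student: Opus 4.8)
The plan is to prove Corollary~\ref{c1} as an essentially immediate consequence of the definitions together with the linearity relation $TI_{-f}(G) = -TI_f(G)$ already noted in the text. First I would recall that, by definition, $m_f(n,i)$ is the minimum of $TI_f(PC)$ over the (finite, nonempty) set $\mathcal{P}_{n,i}$ of all polyomino chains with $n$ squares and final link $L_n = i$, while $M_{-f}(n,i)$ is the maximum of $TI_{-f}(PC)$ over that same set $\mathcal{P}_{n,i}$.

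The key step is the elementary fact that for any real-valued function $h$ on a nonempty finite set $S$, one has $\min_{x\in S} h(x) = -\max_{x\in S}\big(-h(x)\big)$. Applying this with $S = \mathcal{P}_{n,i}$ and $h(PC) = TI_f(PC)$ gives
$$
m_f(n,i) = \min_{PC\in\mathcal{P}_{n,i}} TI_f(PC) = -\max_{PC\in\mathcal{P}_{n,i}}\big(-TI_f(PC)\big).
$$
Then I would invoke the linearity identity $-TI_f(PC) = TI_{-f}(PC)$, valid since $TI_f(G) = \sum_{uv\in E(G)} f(d_u,d_v)$ is linear in $f$ and $-f$ is again a symmetric real-valued function, hence a legitimate choice of index. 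Substituting yields $m_f(n,i) = -\max_{PC\in\mathcal{P}_{n,i}} TI_{-f}(PC) = -M_{-f}(n,i)$, which is the claimed identity; this covers all $n\ge 3$ and $i\in\{1,2\}$ uniformly, including the base case $n=3$ where $\mathcal{P}_{3,i}$ has a single element. One could optionally remark that combining this with Theorem~\ref{t1} (applied to the index $TI_{-f}$) and the base values $M_{-f}(3,1) = TI_{-f}(PC_2) + g_{-f}(1,1)$, $M_{-f}(3,2) = TI_{-f}(PC_2) + g_{-f}(2)$ produces an explicit recurrence for $m_f$ as well.

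There is no real obstacle here — the statement is a formal corollary. The only point that warrants a sentence of care is confirming that the same index family is closed under $f\mapsto -f$ (so that $M_{-f}$ is well-defined) and that the optimization in $M_{-f}(n,i)$ ranges over exactly the same set of chains $\mathcal{P}_{n,i}$ as the one in $m_f(n,i)$; once that is observed, the result follows from the $\min$/$\max$ duality. I would keep the written proof to three or four lines.
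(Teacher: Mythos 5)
Your proposal is correct and matches the paper's reasoning exactly: the paper also derives the corollary directly from the linearity identity $TI_{-f}(G)=-TI_f(G)$ and the resulting $\min$/$\max$ duality over the same set of chains. No further commentary is needed.
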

According to Corollary \ref{c1}, we may restrict our attention to the maximization problem. At this point, it is worth emphasizing that the method introduced in Theorem~\ref{t1} not only allows us to determine the maximum value of a fixed degree-based topological index over all polyomino chains with $n \geq 4$ squares and a final link of type $i$, but also provides a constructive way to generate at least one maximal polyomino chain. This construction can be performed recursively by selecting each $ L_j $, for $ j \in \{3, \dots, n-1\} $, according to the following rule: assign $ L_j = 1 $ if and only if
$$
M_f(j,1) + g_f(1, L_{j+1}) > M_f(j,2) + g_f(2, L_{j+1}),
$$
and assign $ L_j = 2 $ otherwise. Hereafter, we refer to this procedure as the \emph{constructive method} induced by the problem $ M_f(n,i) $. 
This naturally raises the following questions: What happens in the presence of ties? And how do these ties relate to the set of maximal polyomino chains? These issues are precisely addressed in the following results.

 From this point on, for fixed integers $ n \geq 3 $ and $ i \in \{1, 2\} $, we denote by $ n_t(n,i) $ the \emph{number of ties} that occur during the recursive computation of $M_f(n,i)$. These ties represent situations where both possible extensions of the chain yield the same maximum value, resulting in multiple equally optimal configurations. We define the base case as: $n_t(3,j) = 0$ for all  $j \in \{1,2\}$. For $ n \geq 4 $,  $n_t(n,i)$ is defined recursively as follows:
\begin{enumerate}
    \item [a)] if $M_f(n-1,j) + g_f(j,i) > M_f(n-1,3-j) + g_f(3-j,i)$, then $n_t(n,i)=n_t(n-1,j)$;
    \item [b)] if  $M_f(n-1,1) + g_f(1,i) = M_f(n-1,2) + g_f(2,i)$, then $$n_t(n,i)=1 + n_t(n-1,1) + n_t(n-1,2).$$
\end{enumerate}

\begin{lem}\label{l2}
   Let $n \geq 4$ and $i \in \{1,2\}$. Then the following statements hold:
\begin{enumerate}
    \item[a)] Suppose that $PC(L_3,\dots,L_{n-1},i)$ is a polyomino chain such that
   $$
    M_f(n,i) = TI_f\big(PC(L_3,\dots,L_{n-1},i)\big).
   $$
    Then, for all $j\in \{3,\dots,n-1\}$, it follows that
   $$
    M_f(j, L_j) = TI_f\big(PC(L_3, \dots, L_j)\big).
   $$
    
    \item[b)] Assume that $ PC(L^1_1, \dots, L^1_{n-1}, i) $ is a maximal polyomino chain corresponding to the problem $ M_f(n, i) $. Then, there exists at least one distinct maximal polyomino chain $ PC(L^2_1, \dots, L^2_{n-1}, i) $ if and only if
$$
n_t(j+1,L^1_{j+1})=1+n_t(j,1)+n_t(j,2),
$$
for some $j\in \{3,\dots,n-1\}$.

\item[c)] Given $j\in \{3,\dots,n-1\}$. The maximum number of consecutive links, counting backward from the last one, that are shared by all maximal polyomino chains of the problem  $M_f(n,i)$ is $ n - j $ and these links are precisely $ L_{j+1}, \dots, L_{n-1}, L_n=i $ if and only if 
$$
n_t(n,i) = n_t(n-1,L_{n-1}) = \dots = n_t(j+1,L_{j+1}) = 1 + n_t(j,1) + n_t(j,2).
$$

    \end{enumerate}
\end{lem}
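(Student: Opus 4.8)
The plan is to establish the three parts in order, since part (a) is the structural backbone that feeds into (b), and (b) in turn feeds into (c). For part (a), I would argue by downward induction on $j$ from $n-1$ to $3$. The key observation is the optimal substructure property: applying Lemma~\ref{l1} to $PC(L_3,\dots,L_j)$ for $j \leq n-1$ gives $TI_f(PC(L_3,\dots,L_j)) = TI_f(PC(L_3,\dots,L_{j-1})) + g_f(L_{j-1},L_j)$. Suppose for contradiction that $M_f(j,L_j) > TI_f(PC(L_3,\dots,L_j))$ for some $j$. Then there is a chain $PC(L'_3,\dots,L'_{j-1},L_j)$ achieving $M_f(j,L_j)$, and one can splice it in place of the prefix $PC(L_3,\dots,L_j)$: by repeated application of Lemma~\ref{l1}, the resulting chain $PC(L'_3,\dots,L'_{j-1},L_j,L_{j+1},\dots,L_{n-1},i)$ has $TI_f$ value strictly exceeding $TI_f(PC(L_3,\dots,L_{n-1},i)) = M_f(n,i)$, contradicting the definition of $M_f(n,i)$ as a maximum over \emph{all} chains with $n$ squares ending in link $i$. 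This works because the suffix contributions $g_f(L_j,L_{j+1}) + \dots + g_f(L_{n-1},i)$ depend only on $L_j$ and the later links, all of which are held fixed.

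For part (b), the strategy is to translate ``existence of a distinct maximal chain'' into ``a tie occurred at some stage.'' Using part (a), any maximal chain $PC(L_3,\dots,L_{n-1},i)$ has the property that at each stage $j$, the value $M_f(j,L_j) = M_f(j-1,L_{j-1}) + g_f(L_{j-1},L_j)$ realizes the maximum in Theorem~\ref{t1}. Two maximal chains differ precisely when, at some stage $j+1$, both options $M_f(j,1)+g_f(1,L^1_{j+1})$ and $M_f(j,2)+g_f(2,L^1_{j+1})$ attain the maximum defining $M_f(j+1,L^1_{j+1})$ — that is, a tie of type (b) in the definition of $n_t$ — OR the prefixes themselves differ even with the same $L_j$, which recursively reduces to a tie at an earlier stage. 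The bookkeeping quantity $n_t(j+1,L^1_{j+1})$ is designed exactly so that $n_t(j+1,L^1_{j+1}) = 1 + n_t(j,1) + n_t(j,2)$ is equivalent to ``a tie occurred at stage $j+1$ in computing $M_f(j+1, L^1_{j+1})$.'' So I would prove: a distinct maximal chain exists $\iff$ the constructive method faces a genuine choice at some stage $\iff$ $n_t$ records a tie at that stage, i.e. $n_t(j+1,L^1_{j+1}) = 1 + n_t(j,1) + n_t(j,2)$ for some $j$. The forward direction picks the first index where the two chains diverge (reading backward) and checks the tie condition there; the reverse direction uses a tie to construct an alternative link choice and then extends it (using part (a) and the definition of $M_f$) to a full maximal chain.

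For part (c), I would combine (a) and (b). Call a suffix $L_{k+1},\dots,L_n=i$ \emph{forced} if every maximal chain of $M_f(n,i)$ shares it. By part (b), the suffix starting at position $j+1$ is \emph{not} forced exactly when some tie at a stage $\geq j+1$ permits a different choice of $L_{j+1}$ — and by part (a) plus the recursive structure of $n_t$, the suffix $L_{j+1},\dots,L_n$ is shared by all maximal chains while $L_j$ is \emph{not} determined precisely when the chain of equalities $n_t(n,i) = n_t(n-1,L_{n-1}) = \dots = n_t(j+1,L_{j+1}) = 1 + n_t(j,1)+n_t(j,2)$ holds: each equality $n_t(k,L_k) = n_t(k-1,L_{k-1})$ for $k > j+1$ says ``no new tie at stage $k$, so $L_k$ is forced given the later links,'' while the final equality $n_t(j+1,L_{j+1}) = 1 + n_t(j,1) + n_t(j,2)$ says ``a tie at stage $j+1$, so $L_j$ is the first ambiguous link.'' Thus the maximum number of backward-consecutive forced links is $n-j$.

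The main obstacle will be part (b), specifically the careful case analysis distinguishing ``the two maximal chains have the same $L_{j+1}$ but different prefixes'' from ``they have different $L_{j+1}$.'' One must verify that the recursion $n_t(j+1,L^1_{j+1}) = n_t(j, L^1_j)$ in the no-tie case really does propagate ``uniqueness of the prefix'' correctly, and that in the tie case the formula $n_t = 1 + n_t(j,1) + n_t(j,2)$ accounts for \emph{all} distinct maximal prefixes (those coming from branch $1$, those from branch $2$, and the ``$+1$'' being the diagonal choice) without double-counting. Getting the ``first point of divergence, read backward'' argument to interface cleanly with the forward recursion defining $n_t$ is where the proof needs the most care; the rest is a direct unwinding of Lemma~\ref{l1} and Theorem~\ref{t1}.
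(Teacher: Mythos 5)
Your proposal follows essentially the same route as the paper: part (a) by an exchange/splicing contradiction using Lemma~\ref{l1}, part (b) by locating the first point of divergence reading backward and identifying it with the tie branch of the $n_t$ recursion (and, conversely, splicing in a maximal prefix from the other branch at a tie), and part (c) by combining (a) and (b) with the chain of $n_t$ equalities. The key ideas you flag as needing care are exactly the ones the paper's proof handles, so no substantive difference or gap to report.
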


\begin{proof}
\hfill

\begin{enumerate}

\item[a)] 

The first statement is immediate for the case $j=3$. For $j\geq 4$, assume by contradiction that
$$
TI_f\big( PC(L_3, \dots, L_j) \big) < M_f(j, L_j).
$$
By the definition of $ M_f(j, L_j) $, there exists at least a polyomino chain $ PC(L^1_3, \dots, L^1_{j-1}, L_j) $ such that
$$
M_f(j, L_j) = TI_f\big( PC(L^1_3, \dots, L^1_{j-1}, L_j) \big).
$$
Then, by Lemma~\ref{l1}, it follows that
$$
TI_f\big( PC(L_3, \dots, L_j, L_{j+1}, \dots, i) \big) < TI_f\big( PC(L^1_3, \dots, L^1_{j-1}, L_j, L_{j+1}, \dots, i) \big),
$$
which contradicts the maximality of the chain $ PC(L_3, \dots, L_{n-1}, i) $.

\item[b)] 

    Now, let us prove the  forward implication of the second statement.  Since the two polyomino chains are distinct, there must be at least one index at which they differ. Without loss of generality, assume that $j\in \{ 3, \dots, n-1\}$ is the first index (counting from right to left) such that $ L^1_j \neq L^2_j $, that is,  $L^1_k = L^2_k $ for all $ k\geq j+1$.
By Lemma~\ref{l1}, it follows that
$$
TI_f\big( PC(L^1_3, \dots, L^1_j, L^1_{j+1}) \big) = TI_f\big( PC(L^2_3, \dots, L^2_j, L^1_{j+1}) \big).
$$
Moreover, by the first statement of this lemma, for $ h = 1, 2 $,
$$
M_f(j, L^h_j) = TI_f\big( PC(L^h_3, \dots, L^h_j) \big).
$$
Therefore,
$$
M_f(j, L^1_j) + g_f(L^1_j, L^1_{j+1}) = M_f(j, L^2_j) + g_f(L^2_j, L^1_{j+1}).
$$ 
To prove the reverse implication, observe that,
$$
M_f(n, i) = TI_f\big(PC(L^1_3, \dots, L^1_{n-1}, i)\big).
$$
Then, applying the first statement and Lemma~\ref{l1}, we obtain
$$
M_f(n, i) = M_f(j+1, L^1_{j+1}) + g_f(L^1_{j+1}, L^1_{j+2}) + \dots + g_f(L^1_{n-1}, i).
$$
By hypothesis, we have
$$
M_f(j-1, 1) + g_f(1, L^1_j) = M_f(j-1, 2) + g_f(2, L^1_j),
$$
which leads to the identity,
$$
\begin{aligned}
M_f(n, i) &= M_f(j-1, 1) + g_f(1, L^1_j) + g_f(L^1_j, L^1_{j+1}) + \dots + g_f(L^1_{n-1}, i) \\
          &= M_f(j-1, 2) + g_f(2, L^1_j) + g_f(L^1_j, L^1_{j+1}) + \dots + g_f(L^1_{n-1}, i).
\end{aligned}
$$

Now, without loss of generality, assume that $L^1_{j-1} = 1$. Then, by the first statement,
$$
M_f(j-1, 1) = TI_f\big(PC(L^1_3, \dots, L^1_{j-2}, 1)\big).
$$
Moreover, there exists a polyomino chain $PC(L^2_3, \dots, L^2_{j-2}, 2)$ such that
$$
M_f(j-1, 2) = TI_f\big(PC(L^3_3, \dots, L^3_{j-2}, 2)\big).
$$

Therefore, according to Lemma \ref{l1}, the polyomino chains
$$
PC(L^1_3, \dots, L^1_{j-2}, 1, L^1_j, \dots, L^1_{n-1}, i) \quad \text{and} \quad PC(L^2_3, \dots, L^2_{j-2}, 2, L^1_j, \dots, L^1_{n-1}, i)
$$
are distinct and both attain the value $ M_f(n, i)$, as required.

\item [c)]
To prove the sufficiency direction, assume that all maximal polyomino chains associated with $M_f(n,i)$ share the same last $n - j$ links, denoted by $L_m$ for $ m \in \{j+1, \dots, n\} $, and that at least two of these chains differ at the $ j^\text{th} $ link. Selecting two such chains that differ at index $ j $, and applying the second part of this lemma, it follows that
$$
n_t(j+1, L_{j+1}) = 1 + n_t(j,1) + n_t(j,2).
$$ 

Now, to prove that $ n_t(m+1, L_{m+1}) = n_t(m, L_m) $ for all $ m \in \{n-1, \dots, j+1\} $, suppose by contradiction that
$$
n_t(m+1, L_{m+1}) = 1 + n_t(m,1) + n_t(m,2).
$$
Since every maximal polyomino chain associated with $ M_f(n,i) $ has $ L_{m+1} $ as their $(m+1)^\text{th}$ link, applying the second part of the lemma implies the existence of a maximal polyomino chain for $ M_f(n,i) $ whose $ m^\text{th} $ link differs from $ L_m $, contradicting the assumption that all maximal chains share the same last $n-j$ link.

For the reverse implication of the statement assume that  
$$
n_t(n,i) = n_t(n-1,L_{n-1}) = \dots = n_t(j+1,L_{j+1}) = 1 + n_t(j,1) + n_t(j,2)
$$  
for some $j\in \{3, \dots,n-1\} $.  From this equality chain, it is obtained that
$$M_f(n,i)=M_f(j+1,L_{j+1})+g_f(L_{j+1},L_{j+2})+\dots +g_f(L_{n-1},i).$$

Thus, we can construct at least one maximal polyomino chain for the problem $M_f(n,i)$ of the form  
$$
PC(L_3, \dots, L_j, L_{j+1}, L_{j+2}, \dots, L_{n-1}, i).
$$  
Let $A $ denote the set of all maximal polyomino chains associated with $ M_f(n,i) $. Then, every chain in $A $ must share the last $ n - j $ links with the one constructed above.  To verify this, suppose that there exists a chain in $ A $ that differs from it for the first time (from right to left) at some link of index $ k $, with $ k\in \{j+1, \dots,n-1\}$. Then, by the second part of this lemma, it follows that  
$$
n_t(k+1, L_{k+1}) = 1 + n_t(k,1) + n_t(k,2),
$$  
which contradicts our initial assumption.

Finally, we need to show that there exist at least two polyomino chains in $A$ that differ at the $j^{th}$ link. This follows directly from the fact that all chains in $A$ have their $(j+1)^{th}$ link equal to $L_{j+1}$, and since 

$$n_t(j+1, L_{j+1}) = 1 + n_t(j,1) + n_t(j,2),$$  

the second part of this lemma ensures the existence of two chains differing at the $j^{th}$ link.

Thus, both conditions required in the statement are satisfied.

\end{enumerate}

\end{proof}

\begin{thm}\label{p1}
Let $ n \geq 4 $ and $ i \in\{ 1,2\}$. Then $n_t(n,i) + 1$ equals the number of distinct maximal polyomino chains (possibly isomorphic), for the problem $M_f(n,i)$, among all polyomino chains with $ n $ squares and  final link of type $ i $. Moreover, these maximal chains can be generated by tracking the ties produced by the problem $M_f(n,i)$. 
\end{thm}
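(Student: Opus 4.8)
The plan is to prove both assertions simultaneously by induction on $n$, taking $n=3$ as the base: there $n_t(3,i)=0$ and $PC(i)$ is the unique polyomino chain with three squares whose final link is $i$, so the count is $0+1=1$, as required. The inductive step rests on the optimal-substructure fact recorded in Lemma~\ref{l2}(a): every maximal chain $PC(L_3,\dots,L_{n-1},i)$ for $M_f(n,i)$ has the property that its truncation $PC(L_3,\dots,L_{n-1})$ is itself maximal for $M_f(n-1,L_{n-1})$, and, combining this with Lemma~\ref{l1}, $M_f(n,i)=M_f(n-1,L_{n-1})+g_f(L_{n-1},i)$. Conversely, Lemma~\ref{l1} shows that appending the link $i$ to any maximal chain for $M_f(n-1,j)$ produces a chain whose index value equals $M_f(n-1,j)+g_f(j,i)$.

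The argument then splits exactly along the two clauses defining $n_t(n,i)$. In the no-tie case, say $M_f(n-1,j)+g_f(j,i)>M_f(n-1,3-j)+g_f(3-j,i)$, the displayed identity forces $L_{n-1}=j$ for every maximal chain of $M_f(n,i)$; hence the operation ``append $i$'' is a bijection between the maximal chains for $M_f(n-1,j)$ and those for $M_f(n,i)$ — injective since distinct truncations give distinct vectors, surjective by Lemma~\ref{l2}(a). By the induction hypothesis the source set has $n_t(n-1,j)+1$ elements, which equals $n_t(n,i)+1$ by clause (a). In the tie case $M_f(n-1,1)+g_f(1,i)=M_f(n-1,2)+g_f(2,i)=M_f(n,i)$, so appending $i$ to a maximal chain for either $M_f(n-1,1)$ or $M_f(n-1,2)$ yields a maximal chain for $M_f(n,i)$, and by Lemma~\ref{l2}(a) every maximal chain for $M_f(n,i)$ arises in this way. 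The two families are disjoint, being distinguished by the value of $L_{n-1}$, and ``append $i$'' is injective on each, so the total count is $(n_t(n-1,1)+1)+(n_t(n-1,2)+1)=1+n_t(n-1,1)+n_t(n-1,2)+1=n_t(n,i)+1$ by clause (b).

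For the second assertion I would run the same induction while tracking the constructive method of Theorem~\ref{t1}, modified so that at each stage $j$ where a tie occurs the procedure branches and records both $L_j=1$ and $L_j=2$. The set of link vectors produced by this branching procedure obeys the identical two-case recurrence just used: in the no-tie case it is the set produced for $M_f(n-1,j)$ with each vector extended by $i$, and in the tie case it is the disjoint union over $j\in\{1,2\}$ of the set produced for $M_f(n-1,j)$ with each vector extended by $i$. Hence, by induction, this set coincides with the set of all maximal chains for $M_f(n,i)$; alternatively, this follows from Lemma~\ref{l2}(b), which pins down exactly when a maximal chain admits a distinct sibling in terms of the tie equality $n_t(k+1,L_{k+1})=1+n_t(k,1)+n_t(k,2)$.

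I expect the main obstacle to be the bookkeeping that makes the ``append $i$'' correspondences honest bijections: one must invoke Lemma~\ref{l2}(a) to ensure that no maximal chain of length $n$ is missed and that each has a maximal truncation, and one must separately check that in the tie case the two contributing families are genuinely disjoint and that appending $i$ cannot make two distinct $(n-1)$-vectors coincide (immediate, but needs stating). A secondary point requiring care is aligning the base case with the fact that the statement is phrased for $n\ge 4$ while the induction is most naturally started at $n=3$.
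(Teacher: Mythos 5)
Your proof is correct, but it follows a genuinely different route from the paper's. You induct on the number of squares $n$, anchoring at $n=3$ and splitting the step according to the two clauses in the definition of $n_t(n,i)$: in the no-tie case the winning value of $L_{n-1}$ is forced, so ``append $i$'' is a bijection from the maximal chains of $M_f(n-1,j)$ onto those of $M_f(n,i)$; in the tie case the maximal chains of $M_f(n,i)$ are the disjoint union of the two appended families, giving $\bigl(n_t(n-1,1)+1\bigr)+\bigl(n_t(n-1,2)+1\bigr)=n_t(n,i)+1$. The only ingredient you need beyond the recurrence of Theorem~\ref{t1} is the optimal-substructure statement, Lemma~\ref{l2}(a), which guarantees surjectivity of the append map. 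The paper instead inducts on the number $m$ of maximal chains: it locates the largest index $n^*$ at which two maximal chains differ, invokes Lemma~\ref{l2}(b) and (c) to force the chain of equalities $n_t(n,i)=\dots=n_t(n^*+1,L_{n^*+1})=1+n_t(n^*,1)+n_t(n^*,2)$, partitions the $m$ chains by the value of $L_{n^*}$, and applies the induction hypothesis to the two subproblems at level $n^*$. Your argument is shorter and arguably more transparent, since it tracks the recursion defining $n_t$ step by step and never needs parts (b) and (c) of Lemma~\ref{l2} for the counting claim (you use (b) only as an optional alternative for the generation claim); the paper's approach, by contrast, packages the ``collapse'' of consecutive no-tie levels into Lemma~\ref{l2}(c) and so makes explicit where the first branching occurs, which is closer in spirit to how the constructive enumeration algorithm actually walks the table. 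Your two flagged caveats --- the disjointness/injectivity bookkeeping and the shift of the base case to $n=3$ --- are both real but entirely routine, exactly as you say.
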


\begin{proof}
We proceed by induction on the number $ m \geq 1 $ of distinct maximal polyomino chains.
\textbf{Base case.} Let $m = 1$, and suppose that $ n_t(n,i) \geq 1 $. Then, by the recursive definition of $ n_t(n, i) $, there must exist $ j \in \{3, \dots, n-1\} $ such that
$$
n_t(n, i) = n_t(n-1, L_{n-1}) = \dots = n_t(j+1, L_{j+1}) = 1 + n_t(j, 1) + n_t(j, 2).$$ 

However, the third  statement of Lemma~\ref{l2} implies the existence of at least two distinct maximal polyomino chains, which contradicts the assumption that $ m = 1$. Therefore, the base case holds.

\textbf{Inductive step.} Assume the result holds for all $m'<m$, with $ m > 1$. By the constructive proof of the first implication in the second statement of Lemma~\ref{l2}, each pair of maximal polyomino chains is associated with some index $ j \in \{3, \dots, n-1\} $. Among the $ m $ maximal polyomino chains, there must exist a pair for which this index attains its largest value, say  $n^* < n $.  As a consequence, the maximal number of consecutive links, starting from the last one, that are shared by all maximal polyomino chains is $n - n^* $.
By the third statement of Lemma \ref{l2} it follows that $n_t(n,i)=n_t(n^*+1,L_{n^*+1})=1+n_t(n^*,1)+n_t(n^*,2)$. On the other hand, we can partition the set of maximal polyomino chains into two subsets according to the value of the link at position $ n^* $: one with $ L_{n^*} = 1 $ and another with $ L_{n^*} = 2 $, having cardinalities $ m_1 $ and $ m_2 $, respectively, where $ m_1 + m_2 = m $ and $ m_i \geq 1 $ for $ i = 1,2 $. By the first statement of Lemma~\ref{l2}, these $ m_1 $ and $ m_2 $ maximal chains can be cut into maximal polyomino chains for the subproblems $ M_f(n^*,1) $ and $ M_f(n^*,2) $, respectively. Moreover, such chains have to correspond to all maximal chains for these subproblems; otherwise, a distinct maximal chain in either subset could be extended to produce a new maximal chain for $ M_f(n,i) $, contradicting the assumption that there are exactly $ m $ maximal chains. Thus, by the induction hypothesis, we have
$$
n_t(n^*,1) = m_1 - 1 \quad \text{and} \quad n_t(n^*,2) = m_2 - 1,
$$
which implies
$$
n_t(n,i)=n_t(n^*+1,L_{n^*+1}) = 1+  (m_1 - 1) + (m_2 - 1)  = m - 1.
$$ This complete the inductive proof.

Finally, the second part of the theorem follows directly from the fact that, by keeping track of the $n_t(n,i)$ ties that arise during the recursive computation of $ M_f(n, i)$, one can construct exactly $n_t(n,i) + 1$ distinct maximal polyomino chains, which may be isomorphic.

\end{proof}

Theorem~\ref{p1} shows that, once the number of squares and the ending link are fixed, if the recursive computation of $M_f(n,i)$ yields no ties, then the constructive method produces the unique maximal polyomino chain. Conversely, if ties do occur during the recursive process, then each polyomino chain obtained by tracking these ties is maximal, and together they constitute the complete set of maximal chains under the given conditions.

At this point, it is worth emphasizing that our main objective is to determine the maximum value of a degree-based topological index among all polyomino chains with a fixed number of squares, say $ n $. Therefore, the quantity of real interest is
$$
M_f(n) := \max\{M_f(n,1), M_f(n,2)\}.
$$
In any case, the problem reduces to analyzing $ M_f(n,i) $, and thus, all the previous discussion remains fully applicable. As an initial consequence of the above results, the following proposition provides sufficient conditions under which the linear and zigzag chains are the maximal polyomino chains with respect to a degree-based topological indices.

\begin{prop}\label{p2}
Let $ TI_f $ be a degree-based topological index such that 
$$
g_f(1,1) > \max\left\{g_f(1,2),\, g_f(2,2),\, \frac{g_f(1,2)+g_f(2,1)}{2}\right\}.
$$
Then:

\begin{itemize}
    \item[a)] If $ g_f(1,1) > g_f(2) $, then among all polyomino chains with $ n \geq 3 $ squares, the linear chain $ Li_n $ uniquely maximizes $ TI_f $.
    
    \item[b)] If $ g_f(1,1) = g_f(2) $, then for all $ n \geq 4 $, $ Li_n $ uniquely maximizes $ TI_f $; and for $ n = 3 $, we have $ TI_f(PC(2)) = TI_f(PC(1)) $.
    
    \item[c)] If $ g_f(1,1) < g_f(2) $, define the threshold
   $
    n^* := \left\lceil \frac{g_f(2) - g_f(1,1)}{g_f(1,1) - g_f(2,2)} + 3 \right\rceil. 
   $
   Then:
    \begin{itemize}
       
        \item[i)] For $ 3 \leq n < n^* $,  $ Z_n $ uniquely maximizes $ TI_f $.
        
        \item[ii)] For $ n = n^* $, $ Li_n $ achieves the maximum $ TI_f $; and possibly $ Z_n $ does as well.
         \item[iii)] For $ n > n^* $, $ Li_n $ uniquely maximizes $ TI_f $.
    \end{itemize}
\end{itemize}

\end{prop}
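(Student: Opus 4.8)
The plan is to reduce the whole statement to computing the two DP values $M_f(n,1)$ and $M_f(n,2)$ together with their maximal configurations, and then reading off the three cases. First I would fix abbreviations $a=g_f(1,1)$, $b=g_f(1,2)$, $c=g_f(2,1)$, $d=g_f(2,2)$, $e=g_f(2)$, record from Remark~\ref{r1} the identity $c=a+b-e$, and rewrite the hypothesis as the three inequalities $a>b$, $a>d$, and (substituting $c=a+b-e$ into $2a>b+c$) $a+e>2b$. I would also note the elementary evaluations $TI_f(Li_n)=TI_f(PC_2)+(n-2)a$ and $TI_f(Z_n)=TI_f(PC_2)+e+(n-3)d$ obtained by iterating Lemma~\ref{l1}.

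The core step is to prove, by induction on $n$ with base case $n=3$ (where the only chains are $PC(1)=Li_3$ and $PC(2)=Z_3$), the following structural description:
\begin{itemize}
\item[(i)] $M_f(n,1)=TI_f(PC_2)+(n-2)a$, and $Li_n$ is the unique maximal chain ending in link $1$;
\item[(ii)] for $n\ge 4$, $M_f(n,2)=TI_f(PC_2)+\max\{(n-3)a+b,\;e+(n-3)d\}$, and the maximal chains ending in link $2$ are $PC(1^{\,n-3},2)$, or $Z_n$, or both, according to the sign of $(n-3)(a-d)-(e-b)$.
\end{itemize}
For the inductive step I would substitute the inductive hypotheses into the recurrence of Theorem~\ref{t1}, carrying along both the value and the set of maximizers. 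For (i) the two candidates are $M_f(n-1,1)+a=TI_f(PC_2)+(n-2)a$ and $M_f(n-1,2)+c$; using $c=a+b-e$ the latter reduces to a quantity strictly dominated by the former precisely because of $a>b$, $a>d$ and $a+e>2b$ — so the fourth hypothesis is exactly what is needed — which yields both the value and the uniqueness of $Li_n$. For (ii) the substitution produces three candidate values $(n-3)a+b$, $(n-4)a+b+d$, $e+(n-3)d$; since $a>d$ the middle one is always dominated by the first, leaving the stated maximum, realized through predecessor $Li_{n-1}$ (giving $PC(1^{\,n-3},2)$) in one regime, $Z_{n-1}$ (giving $Z_n$) in the other, and with a genuine tie exactly on $(n-3)(a-d)=e-b$. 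Uniqueness propagates because the discarded branches are strictly smaller and, away from its own threshold, the surviving subproblem has a unique optimizer by induction (equivalently, one may phrase this via $n_t=0$ and Theorem~\ref{p1}).

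With (i) and (ii) in hand I would finish by comparing the two, since $M_f(n)=\max\{M_f(n,1),M_f(n,2)\}$ and the global maximizers are exactly the maximal chains of whichever subproblems $M_f(n,i)$ attain this maximum. For $n\ge 4$ one computes
$$M_f(n,1)-M_f(n,2)=\min\{\,a-b,\;(n-3)(a-d)-(e-a)\,\},$$
so, since $a-b>0$, the sign is governed by $n-3$ versus $\tfrac{e-a}{a-d}$. If $e<a$ (case a) or $e=a$ (case b) this is positive for all $n\ge 4$, so $Li_n$ uniquely maximizes; for $n=3$ one compares $M_f(3,1)=TI_f(PC(1))$ and $M_f(3,2)=TI_f(PC(2))$ directly, giving $Li_3$ uniquely when $a>e$ and the equality $TI_f(PC(2))=TI_f(PC(1))$ when $a=e$. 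If $e>a$ (case c), set $\theta=\tfrac{e-a}{a-d}>0$, so $n^*=\lceil\theta+3\rceil\ge 4$; for $3\le n<n^*$ one has $n<\theta+3$ (because $\lceil x\rceil-1<x$), hence $M_f(n,2)>M_f(n,1)$, and moreover $n-3<\theta<\tfrac{e-b}{a-d}$ puts us in the regime of (ii) where $Z_n$ is the unique maximal chain ending in link $2$, so $Z_n$ is the unique global maximizer; for $n>n^*$ one has $n-3>\theta$, so $Li_n$ is the unique global maximizer; and at $n=n^*$ the difference vanishes precisely when $\theta+3\in\zz$, in which case both $Li_{n^*}$ and $Z_{n^*}$ (and nothing else) maximize, while if $\theta+3\notin\zz$ then $n^*-3>\theta$ and $Li_{n^*}$ alone maximizes — which is exactly the asserted statement (ii).

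The step I expect to be the main obstacle is the bookkeeping inside (ii): keeping the three competing values for $M_f(n,2)$ straight and, above all, transporting the uniqueness/tie information correctly through the induction. The delicate point is that the internal tie of $M_f(n,2)$ occurs at $n=3+\tfrac{e-b}{a-d}$, which is strictly larger than the threshold $n^*\approx 3+\tfrac{e-a}{a-d}$ controlling the global comparison (because $a>b$); one must check these thresholds never collide at $n=n^*$, so that there $M_f(n^*,2)$ is still realized uniquely by $Z_{n^*}$ and the only possible additional maximizer is $Li_{n^*}$. A secondary nuisance is the degeneration of the $(n-4)$-terms at $n=4$ and the interplay of the ceiling with integrality at $n=n^*$, both requiring separate but routine checking.
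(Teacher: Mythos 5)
Your proof is correct and follows essentially the same route as the paper: an induction on the recurrence of Theorem~\ref{t1}, using Remark~\ref{r1} to reduce the hypothesis to $g_f(1,1)>g_f(1,2)$, $g_f(1,1)>g_f(2,2)$ and $g_f(1,1)+g_f(2)>2g_f(1,2)$, establishing that $Li_n$ uniquely realizes $M_f(n,1)$ for $n\geq 4$, and then comparing the two columns $M_f(n,1)$ and $M_f(n,2)$. The only organizational difference is that you carry an explicit closed form for $M_f(n,2)=TI_f(PC_2)+\max\{(n-3)g_f(1,1)+g_f(1,2),\,g_f(2)+(n-3)g_f(2,2)\}$ throughout, whereas the paper argues via a persistence invariant (once $M_f(n,1)\geq M_f(n,2)$ it stays strictly ahead) and derives the closed form $M_f(n,2)=M_f(3,2)+(n-3)g_f(2,2)$ only in the regime where the zig-zag dominates; both yield the same threshold $n^*$ and the same uniqueness conclusions.
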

\begin{proof}
We begin by proving the following auxiliary statement: if for some $ n \geq 4 $, 

$$
 M_f(n-1,1) + g_f(1,1) \geq M_f(n-1,2) + g_f(2,1), 
$$
then it follows that 
$$
M_f(n,1) + g_f(1,1)> M_f(n,2) + g_f(2,1).
$$
To see this, note that by definition $ M_f(n,2) $  equals either $$M_f(n-1,1) + g_f(1,2) ~ \text{or}~ M_f(n-1,2) + g_f(2,2).$$ In both cases, the assumptions
$$
2g_f(1,1) > g_f(1,2) + g_f(2,1) \quad \text{and} \quad g_f(1,1) > g_f(2,2),
$$
imply the desired inequality. 

From Remark \ref{r1} and the assumption $ g_f(1,1) > g_f(1,2)  $, it follows that, without the possibility of a tie

$$
M_f(4,1) = M_f(3,1) + g_f(1,1) > M_f(3,2) + g_f(2,1).
$$
Thus, combining the above with Theorem~ \ref{p1}, we conclude that for all $ n \geq 4 $, the value of $ M_f(n,1) $  is uniquely attained by the linear chain $ Li_n $. Now consider the global maximum $ M_f(n) $ and prove the following: if for $ n \geq 3 $, we have
$$
M_f(n,1) \geq M_f(n,2),
$$
then it follows that
$$
M_f(n+1,1) > M_f(n+1,2).
$$
The above is a direct consequence of the inequalities $ g_f(1,1) > g_f(1,2) $ and $ g_f(1,1) > g_f(2,2) $, together with the fact that $ M_f(n,1) $ for $ n \geq 4 $ is uniquely attained by $ Li_n $. As a consequence, once $ M_f(n) = M_f(n,1) $ with the possibility $ M_f(n) = M_f(n,1) = M_f(n,2) $, this equality will persist without being able to equal $ M_f(n,2) $ for larger $ n $. To verify that such an equality occurs for some number of squares, assume that there exists $ n \geq 4 $ such that $ M_f(N,2) > M_f(N,1) $ holds for all $ N\in \{4, \dots,n\} $.
 Then necessarily, without a tie
$$
M_f(N,2) = M_f(N-1,2) + g_f(2,2),
$$
otherwise, it follows that
$$M_f(N-1,2) + g_f(2,2)\leq M_f(N-1,1) + g_f(1,2) < M_f(N-1,1) + g_f(1,1)=M_f(N,1),
$$
this implies that
$$
M_f(N,2) = max \{ M_f(N-1,2) + g_f(2,2), M_f(N-1,1) + g_f(1,2) \} \leq M_f(N,1),
$$
which contradicts our assumption that $M_f(N,2)>M_f(N,1)$.

Hence, for  $N\in \{3, \dots, n\}$, the value $M_f(N,2)$ is uniquely attained by the zig-zag chain $Z_n$, and satisfies $$ M_f(n,2) = M_f(3,2) + g_f(2,2)(n - 3), $$ 
whereas we already know that $ M_f(n,1) = M_f(3,1) + g_f(1,1)(n - 3) $. 

Therefore, the condition $ M_f(n,2) > M_f(n,1) $  is equivalent to
$$
n< n^* := \left\lceil \frac{g_f(2) - g_f(1,1)}{g_f(1,1) - g_f(2,2)} + 3 \right\rceil.
$$
This completes the proof by combining the above analysis with Theorem \ref{p1}.
\end{proof}

\begin{rem}
It is worth noting that Corollary 2.11 from \cite{a1}, which includes results originally derived in \cite{a16}, is aligned with Proposition \ref{p2}.
\end{rem}

The results established in this section provide the foundation for solving concrete extremal problems. In the next section, we apply this framework to the augmented Zagreb index and fully characterize the polyomino chains that maximize it.

\section{Polyomino Chains Maximizing the AZI}\label{s2}

The \emph{augmented Zagreb index} ($ AZI $) is a molecular structure descriptor defined by Furtula, Graovac, and Vukićević in \cite{a2}. It was inspired by the success of the \emph{atom-bond connectivity} ($ ABC $) index and  is based on the function
$$
f(x,y) = \left(\frac{xy}{x + y - 2}\right)^3, x, y \in \{1, 2, \dots\}.
$$ 

The $AZI$ outperforms the $ABC$ index in predicting the heat of formation of heptanes and octanes, and has demonstrated the highest correlation with standard heats of formation and boiling points of octane isomers across multiple studies \cite{a3}. Comparative analyses with other topological indices further highlight the exceptional structural sensitivity and smoothness of the $AZI$, particularly in the context of isomeric compounds \cite{a4}. These findings confirm that $AZI$ is a highly effective and reliable descriptor for molecular property prediction.

As a direct consequence of Proposition \ref{p2} and  Corollary \ref{c1}, we obtain the following result concerning the augmented Zagreb index ($AZI$).

\begin{cor}
    Among all polyomino chains with $n$ squares, the linear chain $Li_n$ attains the minimum $AZI$ when $n \geq 6$, while the zigzag chain $Z_n$ achieves the minimum $AZI$ for $n = 3, 4, 5$.
\end{cor}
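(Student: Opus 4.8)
The plan is to deduce this corollary from Proposition~\ref{p2} applied to the function $-f$, and then translate the conclusion about $TI_{-f}$ back into a statement about $AZI = TI_f$ by means of Corollary~\ref{c1}. The first step is to record that, since each of $g_f(1,1), g_f(1,2), g_f(2,1), g_f(2,2), g_f(2)$ in Lemma~\ref{l1} is a fixed \emph{linear} combination of values of $f$, we have $g_{-f}(\cdot) = -g_f(\cdot)$ and $g_{-f}(\cdot,\cdot) = -g_f(\cdot,\cdot)$. Consequently, both the hypothesis and the threshold of Proposition~\ref{p2} for $-f$ can be rewritten purely in terms of $g_f$: the hypothesis
$g_{-f}(1,1) > \max\{g_{-f}(1,2),\, g_{-f}(2,2),\, (g_{-f}(1,2)+g_{-f}(2,1))/2\}$
becomes
$g_f(1,1) < \min\{g_f(1,2),\, g_f(2,2),\, (g_f(1,2)+g_f(2,1))/2\}$.

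The second step is to compute the five relevant quantities for $f(x,y) = \bigl(xy/(x+y-2)\bigr)^3$. The values of $f$ needed are $f(3,3) = (9/4)^3$, $f(3,4) = (12/5)^3$, $f(2,3) = f(2,4) = 8$, and $f(4,4) = (8/3)^3$; substituting these into Lemma~\ref{l1}(a)--(e) produces explicit rationals for $g_f(1,1), g_f(1,2), g_f(2,1), g_f(2,2)$ and $g_f(2)$ (as a consistency check, one verifies the identity of Remark~\ref{r1}, $g_f(2)+g_f(2,1)=g_f(1,1)+g_f(1,2)$). With these numbers one then checks the three strict inequalities of the rewritten hypothesis, namely $g_f(1,1) < g_f(1,2)$, $g_f(1,1) < g_f(2,2)$, and $2g_f(1,1) < g_f(1,2)+g_f(2,1)$, so that Proposition~\ref{p2} applies to $-f$.

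The third step is to select the correct branch of Proposition~\ref{p2}(a)--(c). A comparison of the rationals shows $g_f(1,1) > g_f(2)$, i.e. $g_{-f}(1,1) < g_{-f}(2)$, placing us in case~(c). The threshold for $-f$ is then
$$
n^{*} = \left\lceil \frac{g_{-f}(2) - g_{-f}(1,1)}{g_{-f}(1,1) - g_{-f}(2,2)} + 3 \right\rceil = \left\lceil \frac{g_f(1,1) - g_f(2)}{g_f(2,2) - g_f(1,1)} + 3 \right\rceil,
$$
and a direct evaluation of the two rational differences shows the quotient lies strictly between $2$ and $3$, so $n^{*} = 6$. Proposition~\ref{p2}(c) now gives: for $3 \le n \le 5$ the chain $Z_n$ uniquely maximizes $TI_{-f}$; for $n = 6$ the chain $Li_6$ attains the maximum of $TI_{-f}$ (with $Z_6$ possibly also attaining it, which is irrelevant to the statement); and for $n > 6$ the chain $Li_n$ uniquely maximizes $TI_{-f}$. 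By Corollary~\ref{c1}, maximizing $TI_{-f}$ is exactly minimizing $AZI = TI_f$, so $Z_n$ attains the minimum $AZI$ for $n = 3,4,5$ and $Li_n$ attains it for $n \ge 6$, as claimed.

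The main obstacle is entirely computational: one must carry the fractions $g_f(1,1)=2187/64$, $g_f(2)=3456/125 + 16 - 729/64$, $g_f(2,2)=512/27 + 16$ (and $g_f(1,2),g_f(2,1)$) accurately enough to confirm the strict hypothesis inequalities — the tightest being $g_f(1,1) < g_f(1,2)$ — and, most delicately, to pin down the ceiling defining $n^{*}$, which requires certifying that the quotient $\dfrac{g_f(1,1)-g_f(2)}{g_f(2,2)-g_f(1,1)}$ is genuinely in the open interval $(2,3)$, so that adding $3$ and rounding up yields $6$ and not $5$ or $7$. Once Proposition~\ref{p2} and Corollary~\ref{c1} are granted, no conceptual difficulty remains.
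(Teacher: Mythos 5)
Your proposal is correct and follows exactly the route the paper intends: the corollary is stated there as a direct consequence of Proposition~\ref{p2} applied to $-f$ together with Corollary~\ref{c1}, and your numerical verification of the hypothesis inequalities and of the threshold $n^{*}=6$ (the quotient being approximately $1.9145/0.7911\approx 2.42$) is accurate. Nothing is missing.
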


It is worth mentioning that the above result is consistent with Theorem 2.18 from \cite{a1}.  Although several extremal results regarding the $AZI$  have been reported~\cite{a5}, the specific problem of identifying the \emph{general} polyomino chain(s) with maximal $AZI$ for a fixed number of squares was posed in~\cite{a1} and, to the best of our knowledge, remains unresolved. Motivated by this, the aim of this section is to address that problem using the constructive approach introduced in Section~\ref{s3}. Before starting our main result regarding the $AZI$ index, we first introduce a notation that will simplify the characterization of the polyomino chains under consideration.

\begin{notn}
  We define a polyomino chain  with  $ m \geq 2 $ segments as an \emph{augmented zigzag of type 1}  if all its segments have length 3; this polyomino will be denoted by $ AZ^{1}_{m} $. Similarly, a polyomino chain with  $ m \geq 3 $ segments is called an \emph{augmented zigzag of type 2} if all its segments are of length 3, except for exactly one internal segment of length 2; this family will be denoted by $ AZ^{2}_{m} $. Based on this definition, the number of distinct (up to isomorphism) polyomino chains in $ AZ^{2}_{m} $ is given by $ \left\lceil \frac{m}{2} - 1 \right\rceil $. For simplicity, through this section we will use the notation $g$, $ M(n,i) $ and $ M(n) $ instead of $g_f$, $ M_f(n,i) $ and $ M_f(n) $, respectively. 
\end{notn}

\begin{thm}\label{t2}
For $ n \geq 5 $, the polyomino chains that maximize the $ AZI $ index are as follows: $ AZ^{1}_{\frac{n-1}{2}} $ when $ n $ is odd and $ AZ^{2}_{\frac{n}{2}} $ when $ n $ is even. Specifically,

 $$M(n)= \frac{4456}{125}n-\frac{26763}{2000}-\frac{2312}{3375}I_{\{n=2k\}}.$$

\end{thm}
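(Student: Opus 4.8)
The plan is to apply the dynamic programming machinery from Section~\ref{s3} directly to the specific function $f(x,y)=\left(\frac{xy}{x+y-2}\right)^3$, and to carry the argument out by tracking $M(n,1)$ and $M(n,2)$ together with the tie counts $n_t(n,1)$, $n_t(n,2)$. First I would compute the five constants $g(1,1)$, $g(1,2)$, $g(2,1)$, $g(2,2)$, $g(2)$ explicitly from Lemma~\ref{l1}, using $f(2,3)=6^3=216$, $f(2,4)=4^3=64$, $f(3,3)=\left(\tfrac{9}{4}\right)^3$, $f(3,4)=\left(\tfrac{12}{5}\right)^3$, $f(4,4)=\left(\tfrac{16}{6}\right)^3=\left(\tfrac{8}{3}\right)^3$. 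The key qualitative facts to extract from these numbers are the ordering relations among the $g$-values — in particular one should find $g(2,2)$ is the largest single-step increment while $g(1,1)$ is comparatively small, so Proposition~\ref{p2} does \emph{not} apply (its hypothesis $g(1,1)>\max\{\dots\}$ fails), which is exactly why the extremal chains are not the linear chain. Then the sign of combinations like $2g(2,2)-g(1,2)-g(2,1)$, $g(2,2)-g(1,1)$, $g(2)-g(2,2)$, and the analogue of Remark~\ref{r1} will dictate the recurrence behavior.

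Next I would establish, by induction on $n$, a closed description of the pair $(M(n,1),M(n,2))$. The heuristic is that, since $g(2,2)$ is the dominant increment, a maximal chain wants to use link type $2$ as often as possible; but two consecutive type-$2$ links create a segment of length $2$, and the combinatorics of the $g$-values should reveal that a length-$2$ segment is slightly \emph{worse} than a length-$3$ segment, so an optimal chain alternates the pattern so that segments have length $3$ — this is precisely the augmented zigzag $AZ^1$. When $n$ has the wrong parity for all segments to have length exactly $3$, one internal segment is forced to absorb the discrepancy, and the cheapest way to do that is a single internal segment of length $2$, giving $AZ^2$ and explaining the correction term $-\tfrac{2312}{3375}I_{\{n=2k\}}$. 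Concretely, I would show that for $n$ odd, $M(n)=M(n,2)$ is attained by the chain with link vector $(2,2,1,2,2,1,\dots)$ (so every segment has length $3$), and for $n$ even the optimum has exactly one spot where the pattern is broken into a $(2,2)$ block yielding one length-$2$ segment. The induction step is a finite case check: given the inductive form of $(M(n-1,1),M(n-1,2))$, evaluate the two maxima in Theorem~\ref{t1} for $i=1$ and $i=2$ and verify which branch wins, keeping track of when equality (a tie) occurs.

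Having identified the optimal link patterns, I would then derive the closed form $M(n)=\tfrac{4456}{125}n-\tfrac{26763}{2000}-\tfrac{2312}{3375}I_{\{n=2k\}}$ by a direct count: for the $AZ^1$/$AZ^2$ family, tally how many edges of each degree-pair type $(3,3),(3,4),(2,4),(4,4),(2,3)$ occur as a function of $n$, multiply by the corresponding $f$-values, and simplify. Equivalently, and more in the spirit of the paper, I would sum the per-step increments $g$ along the optimal link vector: for odd $n$ the chain $PC(2,2,1,2,2,1,\dots)$ contributes a fixed base $TI_f(PC_2)+g(2)$ plus a periodic run of increments whose average per square is $\tfrac{4456}{125}$, and for even $n$ one increment in the run is replaced by the (slightly smaller) value coming from the forced length-$2$ segment, producing the $-\tfrac{2312}{3375}$ offset. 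Checking a couple of small base cases $n=5,6,7$ by hand pins down the additive constant $-\tfrac{26763}{2000}$. The main obstacle I anticipate is not any single computation but the bookkeeping in the induction: one must be careful that the claimed pair $(M(n-1,1),M(n-1,2))$ really is the correct invariant to carry (it may be cleaner to carry the \emph{difference} $M(n,1)-M(n,2)$ along with one of the values), and one must handle the parity-dependent phase of the periodic pattern so that the "break point" in the even case is correctly located and shown to be unique up to the $\left\lceil \tfrac{m}{2}-1\right\rceil$ isomorphism count promised for $AZ^2_m$. Getting the tie analysis right — i.e.\ confirming via Theorem~\ref{p1} that the only ties correspond exactly to sliding the length-$2$ segment among the allowed internal positions — is where the delicate part lies, and it is what ultimately yields both the characterization and the exact count of extremal configurations.
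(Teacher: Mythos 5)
Your overall strategy is the same as the paper's: compute the five $g$-values for $f(x,y)=\bigl(\tfrac{xy}{x+y-2}\bigr)^3$, run the recurrence of Theorem~\ref{t1} while carrying the difference $M(n,1)-M(n,2)$, detect a period-two pattern, and use Theorem~\ref{p1} to turn the tie bookkeeping into the count of extremal chains. However, the concrete assertions you commit to are wrong, and they would derail the induction. First, $g(2,2)$ is \emph{not} the dominant increment: one finds $g(2,1)=\tfrac{1728}{125}+\tfrac{729}{32}\approx 36.61$ while $g(2,2)=\tfrac{512}{27}+16\approx 34.96$, so the ordering is $g(2,1)>g(2,2)>g(1,2)>g(1,1)>g(2)$. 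The correct heuristic is therefore ``after a type-$2$ link, follow with a type-$1$ link,'' which forces the alternating tail $\dots,2,1,2,1$ and hence $M(n)=M(n,1)$ for all $n\ge 5$ --- not $M(n)=M(n,2)$ as you claim for odd $n$.

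Second, and relatedly, your proposed optimal link vector $(2,2,1,2,2,1,\dots)$ is internally inconsistent with your own description ``so every segment has length $3$'': that vector produces segments of lengths $2,2,3,2,2,3,\dots$, and its per-square increment in the periodic regime is $\tfrac{1}{3}\bigl(g(2,2)+g(2,1)+g(1,2)\bigr)\approx 35.42$, strictly smaller than the $\tfrac{1}{2}\bigl(g(2,1)+g(1,2)\bigr)=\tfrac{4456}{125}=35.648$ achieved by the genuinely all-length-$3$ chain, whose link vector is $(1,2,1,2,\dots,2,1)$. So the chain you nominate is strictly suboptimal. Your intuition about the even case (one forced length-$2$ internal segment, realized by a single adjacent ``$2,2$'' block whose position can slide, giving the $\lceil\tfrac{n}{4}-1\rceil$ count and the $-\tfrac{2312}{3375}$ offset) is correct in spirit, but it must be grafted onto the correct odd-case skeleton. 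If you redo the five numerical evaluations and restart the case analysis at $n=4$ (where $M(4,1)=M(4,2)$ by Remark~\ref{r1}), the rest of your plan --- periodicity of the decision table and the tie analysis via Theorem~\ref{p1} --- goes through exactly as in the paper.
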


\begin{proof}
According to Theorem~\ref{t1}, the first step is to determine $M(4)$. Observe that
$$
M(3,1) + g(1,1) < M(3,2) + g(2,1), \quad \text{since } 2g(1,1) < g(2) + g(2,1),
$$  
and  
$$
M(3,1) + g(1,2) > M(3,2) + g(2,2), \quad \text{since } g(1,1) + g(1,2) > g(2) + g(2,2).
$$  From these inequalities, we can determine the values of $ M(4,1) $ and $ M(4,2) $. Then, by Remark \ref{r1}, it follows that,
$
M(4)=M(4,1) = M(4,2)$, i.e. $M(4,1)- M(4,2)=0$.

At this point, it is no longer necessary to compute the full values of $ M(n, i) $ to determine $ M(n+1, i) $. It is sufficient to analyze the differences between them, which simplifies the process and allows us to determine the optimal construction using only the function $ g $. 

For $ n = 5 $, the inequalities $g(1,1) < g(2,1)$ and $g(1,2) < g(2,2)$, imply that
$$
M(5,1) = M(4,2) + g(2,1) > M(4,1) + g(1,1), 
$$  
$$
M(5,2) = M(4,2) + g(2,2) > M(4,1) + g(1,2).
$$  
Therefore,  
$$
M(5) = M(5,1) = M(4,2) + g(2,1),
$$  
because $ g(2,1) > g(2,2) $. Moreover, the difference is given by $M(5,1)- M(5,2)=g(2,1)-g(2,2)$. Now, for $ n = 6 $, a similar argument yields:  
$$
M(6,1) = M(5,2) + g(2,1) > M(5,1) + g(1,1),
$$  
$$
M(6,2) = M(5,1) + g(1,2) > M(5,2) + g(2,2).
$$  
Hence,  
$$
M(6) = M(6,1)=M(5,2) + g(2,1),
$$  
and $M(6,1)- M(6,2)=g(2,2)-g(1,2)$. Next, for $ n = 7 $, we observe that:  
$$
M(7,1) = M(6,2) + g(2,1) > M(6,1) + g(1,1), \quad \text{since } g(2,2) + g(1,1) < g(2,1) + g(1,2),
$$  
$$
M(7,2) = M(6,1) + g(1,2) = M(6,2) + g(2,2),
$$  
that is, the computation of $M(7,2)$ yields a tie. Consequently,  
$$
M(7) = M(7,1)= M(6,2) + g(2,1) ,
$$  
since $ g(2,1) > g(2,2) $, and $M(7,1)- M(7,2)=g(2,1)-g(2,2)$. Before proceeding further, we summarize the key information obtained so far in Figure~\ref{f1}, as follows:
\begin{itemize}
    \item[a)] The rows represent the number of squares, while the columns correspond to the ending links.
    \item[b)] The underlined cell(s) in the $ i $-th row indicate(s) the ending link(s) of the maximal polyomino chains consisting of $ i $ squares.

    \item[c)] Green arrows originating from the cell at the $i$-th row and $j$-th column and pointing to the cell at the $(i-1)$-th row and $p$-th column indicate that $p$ is a possible value of the link at step $i-1$ in a maximal polyomino chain with $i$ squares and $L_i = j$.

    \item[d)] The expression in the cell corresponding to the $ i $-th row and the first column represents the terms in $ M(i,1) $ that do not appear in $ M(i,2) $, as determined by the recursive application of Lemma~\ref{l1}. Similarly, the second column contains the terms in $ M(i,2) $ that are not present in $ M(i,1) $. These terms can be identified by the difference $ M(i,1) - M(i,2) $.

\end{itemize} According to the key information summarized in Figure~\ref{f1}, it is evident that for $ n = 7 $, we arrive at the same scenario as for $ n = 5 $. This indicates the onset of a cyclic pattern in the decision process, with a period of two.  Therefore, based on the previous information and Theorem~~\ref{p1}, we conclude the following:  
When $ n $ is odd, the polyomino chain that maximizes $ AZI $ is defined by the sequence of links  
$$
(1,\underbrace{2,1,\dots}_{\frac{n-3}{2} \text{ times}}).
$$  
This corresponds to a polyomino chain composed of $ \frac{n-1}{2} $ segments, each of length 3. In other words, it is the polyomino $ AZ^{1}_{\frac{n-1}{2}} $. When $ n $ is even,  following the decision flow and
taking into account tie cases, the polyomino chains that maximize $ AZI $ are determined by the following sequences of links:
$$
(1,\underbrace{2,1,\dots}_{\frac{n-6}{2} \, \text{times}},2,2,1)=(\underbrace{1,2,\dots}_{\frac{n-4}{2} \, \text{times}},2,1),
$$ and
$$
(L_3,\dots,L_{n-3},1,2,1),
$$
where $M(n-2,1)=TI_f(PC(L_3,\dots,L_{n-3},1)) $  with $ n-2 \geq 6 $. Specifically, for $ n = 2k $, we can prove by induction on $ k $ that all such maximizing chains follow link sequences of the form:
$$
(\underbrace{1,2,\dots}_{i \, \text{times}},\underbrace{2,1,\dots}_{\frac{n-2}{2} - i \, \text{times}}),
$$
for $ i\in \{1, \dots, \frac{n-4}{2} \}$. To establish the inductive step, we first verify the base case. For $ n = 8 $, the polyomino chains that maximize $ AZI $ are determined by the following sequences of links:
$$
(1,2,2,1,2,1)\quad  \text{and} \quad(1,2,1,2,2,1). 
$$ Now, we assume that the claim holds for some $ k $, and let us prove it for $ k+1 $. For $ n = 2k+2 $, we have the following sequences of links:

$$
(1,\underbrace{2,1,\dots}_{\frac{n-6}{2} \, \text{times}},2,2,1),
$$
which corresponds to the case $ i = \frac{n-4}{2} $ and
$$
(\underbrace{1,2,\dots}_{i \, \text{times}},\underbrace{2,1,\dots}_{\frac{n-4}{2} - i \, \text{times}},2,1)
= (\underbrace{1,2,\dots}_{i \, \text{times}},\underbrace{2,1,\dots}_{\frac{n-2}{2} - i \, \text{times}}),
$$
for $ i\in\{1, \dots, \frac{n-6}{2} \}$, by the induction hypothesis. This completes the proof of the stated form. In terms of segments, these sequences correspond to polyomino chains with $ \frac{n}{2} $ segments, all of length 3 except for one internal segment of length 2. In other words, they belong to the family $ AZ^{2}_{\frac{n}{2}} $. Finally, the concrete computations of $M(n)$ follows directly from Lemma~\ref{l1}.

\begin{figure}[ht]
\begin{center}
\begin{tikzpicture}[
  every node/.style={minimum size=1cm, anchor=center},
  arrow/.style={-stealth, thick, green!70!black},
  underline/.style={-stealth, thick,red!70!black},
  every label/.style={font=\small}
]

\matrix (m) [matrix of nodes, row sep=0.9cm, column sep=2.5cm] {
  \textbf{$n \backslash L_n$} & \textbf{1} & \textbf{2} \\
  \textbf{3} &  \reduline{$g(1,1)$} & $g(2)$ \\
  \textbf{4} & \reduline{0} &  \reduline{0} \\
  \textbf{5} & \reduline{$g(2,1)$} & $g(2,2)$ \\
  \textbf{6} & \reduline{$g(2,2)$} & $g(1,2)$ \\
  \textbf{7} & \reduline{$g(2,1)$} & $g(2,2)$ \\
};

\draw[arrow] (m-3-3) -- (m-2-2); 
\draw[arrow] (m-3-2) -- (m-2-3); 

\draw[arrow] (m-4-2) -- (m-3-3); 
\draw[arrow] (m-4-3) -- (m-3-3); 

\draw[arrow] (m-5-3) -- (m-4-2); 
\draw[arrow] (m-5-2) -- (m-4-3); 

\draw[arrow] (m-6-3) -- (m-5-2); 
\draw[arrow] (m-6-2) -- (m-5-3); 
\draw[arrow] (m-6-2) -- (m-5-3); 
\draw[arrow] (m-6-3) -- (m-5-3); 

\end{tikzpicture}
\end{center}
\caption{Graphical representation of the behavior observed in Theorem~\ref{t2}.} 
\label{f1}
\end{figure}
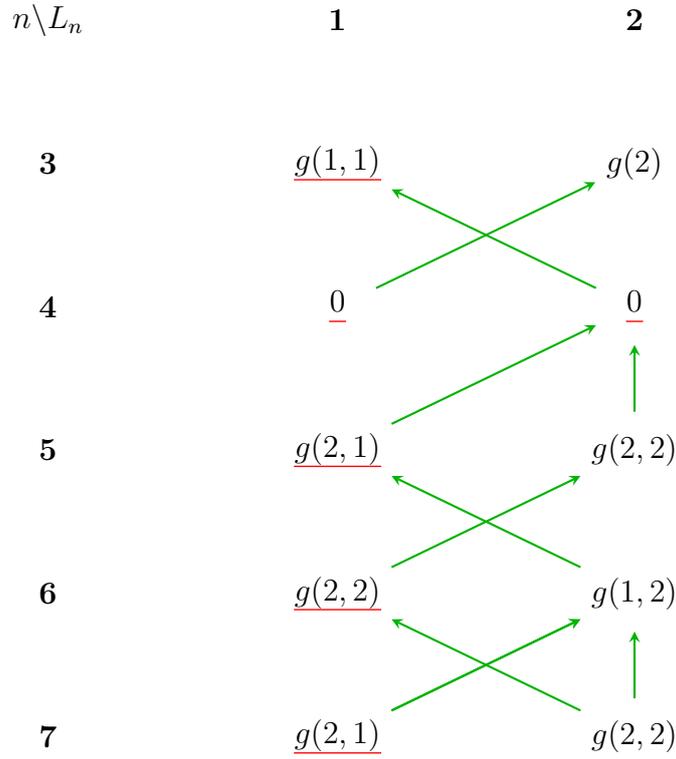

\end{proof}

\begin{rem}
As a direct consequence of the previous proof, one can verify that for $ n = 3 $, the polyomino chain that maximizes $ AZI $ is $ Li_3 $. For $ n = 4 $, the polyomino chains that maximize the $ AZI $ index are $ PC(1,2) $ and $ PC(2,1) $, both of which are isomorphic to a chain composed of two segments of lengths 3 and 2. For $ n \geq 5 $, we have that $ M(n) = M(n,1) > M(n,2) $. Moreover, when $ n $ is odd, $ M(n,1) $ has no ties; when $ n $ is even, it has exactly $ \frac{n - 6}{2} $ ties. In accordance with Theorem~ \ref{p1}, the proof yields exactly one maximal polyomino chain when $ n $ is odd, and $ \frac{n - 6}{2} + 1 $ distinct maximal polyomino chains when $ n $ is even, possibly isomorphic. Furthermore, the number of distinct polyomino chains with $ n $ squares, when $ n $ is even,   that attain the maximum $ AZI $, up to isomorphism, is given by $\left\lceil \frac{n}{4} - 1 \right\rceil.$

\end{rem}

\section{Final Discussion and Code}\label{s5}

In this paper, we have introduced a general framework for identifying extremal polyomino chains with respect to degree-based topological indices, based on dynamic programming approach. As a result, we have established extremal findings concerning linear and zigzag chains.  In particular, we have fully characterized the polyomino chains that maximizes the $AZI$ among all polyomino chains with a fixed number of squares. Despite these advances, many questions remain concerning extremal (\emph{general}) polyomino chains, both for the same and for other topological indices. To the best of our knowledge, these include the general Randi\'c index $R_{\gamma}$ (for $\gamma \leq 0$) and the general atom-bond connectivity index $ABC_{\gamma}$ (for $\gamma \leq 1$). We believe that the proposed methodology offers a valuable tool for addressing these and other related challenges in the study of extremal structures under topological descriptors.\\

Finally, the following link gives access to our implementation of the constructive method, which efficiently computes, for any degree-based topological index and any specified number of squares $n$, a polyomino chain with either maximum or minimum value in linear time with respect to $n$. Moreover, it enables the computation of the complete set of such extremal polyomino chains in linear time with respect to the cardinality of such set. The code also includes our implementation of the sufficient condition for linear and zig-zag polyomino chains as extremal chains, along with detailed comments explaining each method: \href{https://colab.research.google.com/drive/1Rntmks8DTUWgDPridrd71tpCI8n6u_yM?usp=sharing}{Link to the Code}.

\subsection*{Acknowledgment}

\section*{Funding Information}
M. Montes-y-Morales and H. Cruz-Suárez received support from VIEP through grant VIEP-00544-2025. Saylé Sigarreta was supported by CONAHCYT 2023-2024 project CBF2023-2024-1842.

\bibliography{Bib}

\begin{thebibliography}{10}

\bibitem{a18}
{\sc Ali, A., Bhatti, A.~A., and Raza, Z.}
\newblock Some vertex-degree-based topological indices of polyomino chains.
\newblock {\em Journal of Computational and Theoretical Nanoscience 12}, 9 (2015), 2101--2107.

\bibitem{a5}
{\sc Ali, A., Furtula, B., Gutman, I., and Vuki{\v{c}}evi{\'c}, D.}
\newblock Augmented zagreb index: extremal results and bounds.

\bibitem{a1}
{\sc Ali, A., Raza, Z., and Bhatti, A.~A.}
\newblock Bond incident degree (bid) indices of polyomino chains: a unified approach.
\newblock {\em Applied Mathematics and Computation 287\/} (2016), 28--37.

\bibitem{aa6}
{\sc An, M., and Xiong, L.}
\newblock Extremal polyomino chains with respect to general sum-connectivity index.
\newblock {\em Ars Comb. 131\/} (2017), 255--271.

\bibitem{a15}
{\sc Cormen, T.~H., Leiserson, C.~E., Rivest, R.~L., and Stein, C.}
\newblock {\em Introduction to algorithms}.
\newblock MIT press, 2022.

\bibitem{a13}
{\sc Farooq, A., Habib, M., Mahboob, A., et~al.}
\newblock Zagreb polynomials and redefined zagreb indices of dendrimers and polyomino chains.
\newblock {\em Open Chem. 17}, 1 (2019), 1374--1381.

\bibitem{a2}
{\sc Furtula, B., Graovac, A., and Vuki{\v{c}}evi{\'c}, D.}
\newblock Augmented zagreb index.
\newblock {\em Journal of mathematical chemistry 48\/} (2010), 370--380.

\bibitem{a4}
{\sc Furtula, B., Gutman, I., and Dehmer, M.}
\newblock On structure-sensitivity of degree-based topological indices.
\newblock {\em Applied Mathematics and Computation 219}, 17 (2013), 8973--8978.

\bibitem{a12}
{\sc Gnanaraj, L. R.~M., Ganesan, D., and Siddiqui, M.~K.}
\newblock Topological indices and qspr analysis of nsaid drugs.
\newblock {\em Polycyclic Aromatic Compounds 43}, 10 (2023), 9479--9495.

\bibitem{a17}
{\sc Golomb, S.}
\newblock {\em Polyominoes: puzzles, patterns, problems, and packings}.
\newblock Princeton University Press, 1994.

\bibitem{a3}
{\sc Gutman, I., and To{\v{s}}ovi{\'c}, J.}
\newblock Testing the quality of molecular structure descriptors. vertex-degree-based topological indices.
\newblock {\em Journal of the Serbian Chemical Society 78}, 6 (2013), 805--810.

\bibitem{a14}
{\sc Hubert, L., Arabie, P., and Meulman, J.}
\newblock {\em Combinatorial data analysis: Optimization by dynamic programming}.
\newblock SIAM, 2001.

\bibitem{a19}
{\sc Kwun, Y.~C., Farooq, A., Nazeer, W., Zahid, Z., Noreen, S., and Kang, S.~M.}
\newblock Computations of the m-polynomials and degree-based topological indices for dendrimers and polyomino chains.
\newblock {\em International Journal of Analytical Chemistry 2018}, 1 (2018), 1709073.

\bibitem{a16}
{\sc Rada, J.}
\newblock The linear chain as an extremal value of vdb topological indices of polyomino chains.
\newblock {\em Appl. Math. Sci 8\/} (2014), 5133--5143.

\bibitem{a10}
{\sc Shigehalli, V., and Dsouza, A.~M.}
\newblock Quantitative structure property relationship (qspr) analysis of general randi{\'c} index.
\newblock {\em Journal of Algebraic Statistics 13\/} (2022), 1957--1967.

\bibitem{a7}
{\sc Sigarreta, S.~C., and Cruz-Su{\'a}rez, H.}
\newblock Zagreb connection indices on polyomino chains and random polyomino chains.
\newblock {\em Open Mathematics\/} (2024).

\bibitem{a6}
{\sc Sigarreta, S.~C., Sigarreta, S.~M., and Cruz-Su{\'a}rez, H.}
\newblock On degree-based topological indices of random polyomino chains.
\newblock {\em Math. Biosci. Eng 19\/} (2022), 8760--8773.

\bibitem{a8}
{\sc Wei, J., Cancan, M., Rehman, A.~U., Siddiqui, M.~K., Nasir, M., Younas, M.~T., and Hanif, M.~F.}
\newblock On topological indices of remdesivir compound used in treatment of corona virus (covid 19).
\newblock {\em Polycyclic Aromatic Compounds 42}, 7 (2022), 4300--4316.

\bibitem{a11}
{\sc Zaman, S., Yaqoob, H. S.~A., Ullah, A., and Sheikh, M.}
\newblock Qspr analysis of some novel drugs used in blood cancer treatment via degree based topological indices and regression models.
\newblock {\em Polycyclic Aromatic Compounds 44}, 4 (2024), 2458--2474.

\end{thebibliography}
\bibliographystyle{acm}

\end{document}